\let\proof\relax
\let\endproof\relax
\newtheorem{thm}{Theorem}[section]
\newtheorem{theorem}[thm]{Theorem}
\newtheorem{corollary}[thm]{Corollary}
\newtheorem{lemma}[thm]{Lemma}
\newtheorem{proposition}[thm]{Proposition}
\numberwithin{equation}{section}
\theoremstyle{definition}
\newtheorem{definition}[thm]{Definition}
\newtheorem{notation}[thm]{Notation}
\numberwithin{mytheorem}{subsection}
\providecommand{\customgenericname}{}
\newcommand{\newcustomtheorem}[2]{%
	\newenvironment{#1}[1]
	{%
		\renewcommand\customgenericname{#2}%
		\renewcommand\theinnercustomgeneric{##1}%
		\innercustomgeneric
	}
	{\endinnercustomgeneric}
}
\numberwithin{mytheorem}{subsection}
\numberwithin{myconjecture}{subsection}
\numberwithin{mydefinition}{subsection}
\numberwithin{myremark}{subsection}
\numberwithin{mysituation}{subsection}
\numberwithin{myhypothesis}{subsection}
\numberwithin{myquestion}{subsection}
\numberwithin{mynotation}{subsection}
\numberwithin{myfact}{subsection}
\numberwithin{myexamples}{subsection}
\numberwithin{myexample}{subsection}
\numberwithin{myconstruction}{subsection}
\numberwithin{mycaution}{subsection}
\numberwithin{myproposition}{subsection}
\numberwithin{mylemma}{subsection}
\numberwithin{mycorollary}{subsection}
\def\CC{\mathbb{C}}
\def\NN{\mathbb{N}}
\def\QQ{\mathbb{Q}}
\def\RR{\mathbb{R}}
\def\SS{\mathbb{S}}
\def\UU{\mathbb{U}}
\def\ZZ{\mathbb{Z}}
\def\calZ{\mathcal{Z}}
\def\vp{\mathrm{val}_p}
\newcommand{\f}{\frac}
\newif\ifpdf
\begin{document}

		\title[Primitive prime divisors in the critical orbits of rational polynomials]
		{Primitive prime divisors in the critical orbits of one-parameter families of rational polynomials}
		\author[Rufei Ren]{RUFEI REN\\
			Department of Mathematics, Fudan University,\\220 Handan Rd., Yangpu District, Shanghai 200433, China \addressbreak
			e-mail\textup{: \texttt{rufeir@fudan.edu.cn}}}

\receivedline{Received \textup{27} October \textup{2020}}

\date{\today}

%\keywords{Primitive divisors, Zsigmondy set, critical orbit, polynomials}
%\subjclass[2010]{11F41, 37F10, 37P05}
\maketitle

	\begin{abstract}
	For a  polynomial $f(x)\in\QQ[x]$ and rational numbers $c, u$, we put $f_c(x)\coloneqq f(x)+c$, and consider  the Zsigmondy set $\calZ(f_c,u)$ associated to the sequence $\{f_c^n(u)-u\}_{n\geq 1}$, see Definition~\ref{Zsig}, where $f_c^n$ is the $n$-st iteration of $f_c$. 
	In this paper, we prove that if $u$ is a rational critical point of $f$, then there exists an $\mathbf M_f>0$ such that  $\mathbf M_f\geq \max_{c\in \QQ}\{\#\calZ(f_c,u)\}$.
\end{abstract}

\tableofcontents

%\tableofcontents

\section{Introduction}\label{sec:intro}

For every polynomial $f(x)\in \QQ[x]$ and  $\alpha\in \QQ$ we put $f_\alpha(x)\coloneqq f(x)+\alpha.$
Therefore, $f_\alpha$ can be considered as a one-parameter family of polynomials. For every $u\in \QQ$ we write \[\SS_{f,u}\coloneqq \{c\in \QQ\mid \{f_c^n(u)-u\}_{n\geq1} \textrm{~is infinite}\},\]
where $f_c^n$ is the $n$-st iteration of $f_c$. 
In particular, if $u=0$, we put $\SS_f\coloneqq\SS_{f,0}$.

We denote by $\vp(-)$ the $p$-adic valuation of $\QQ$ normalized by $\vp(p)=1$.
In keeping with the terminology of \cite{IngramSilverman2009}, for every polynomial $f(x)\in \QQ[x]$, $u\in \QQ$  and $n\geq 1$ we say that
$p$ is a {\bf primitive prime divisor} of $f^{n}(u)-u$ if
$\vp(f^{n}(u)-u) > 0$ and $\vp(f^{k}(u)-u) \leq 0$
for all $1\leq k < n$.

\begin{definition}\label{Zsig}
	The \textbf{Zsigmondy set} of the sequence $\{f^n(u)-u\}_{n\geq1}$ is defined by
	\begin{equation*}
	\calZ(f,u)\coloneqq \{n\geq 1\;|\; f^n(u)-u \textrm{~has no primitive prime divisor}\}.
	\end{equation*}
\end{definition}

The
primary application of bounds on the Zsigmondy set is towards understanding arboreal Galois representations associated to iteration of rational maps over number
fields. It is first studied by Bang \cite{Bang1886} and Zsigmondy \cite{Zsigmondy1892}. Since then, there have been quite a few research papers  on characterizing/bounding Zsigmondy sets of various sequences in various settings, e.g., Carmichael \cite{Carmichael1913}, Schinzel \cite{Schinzel1974}, Rice \cite{Rice2007}, Ingram--Silverman \cite{IngramSilverman2009}, Doerksen--Haensch \cite{DoerksenHaensch2011}, Gratton--Nguyen--Tucker\cite{GrattonNguyenTucker2013}, Krieger \cite{Krieger2013}, etc.

In this work, we are interested in the size of the Zsigmondy set of a sequence obtained from the critical orbit of polynomials with rational coefficients of degree $d\geq 2$. We denote by $\Sigma$ the set of finite primes of $\ZZ$ and reserve $p$ for a prime number.

We first state our main theorem.

\begin{theorem}\label{main thm3}
	For every polynomial $f\in \QQ[x]$ of degree $d\geq2$ with a critical point $u\in \QQ$ there is a constant $\mathbf{M}_f>0$, depending only on $f$ (independent of $c\in \QQ$), such that
\[\#\calZ(f_c,u)\leq \mathbf{M}_f\]  for every $c\in \SS_{f,u}$.

\end{theorem}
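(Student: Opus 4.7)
The plan is to combine a local identity at each prime, forced by the vanishing of derivatives along the critical orbit, with a global height estimate, tracking all constants carefully to achieve uniformity in $c$. Fix $n\in\calZ(f_c,u)$. For every prime $p\mid f_c^n(u)-u$ there is a smallest $k_p<n$ with $p\mid f_c^{k_p}(u)-u$, and a standard orbit argument gives $k_p\mid n$. Since $f'(u)=0=f_c'(u)$, the chain rule forces $(f_c^m)'(u)=0$ for every $m\geq 1$. Setting $g=f_c^{k_p}$ and Taylor-expanding at $u$,
\[
g^{m+1}(u)-u=\bigl(g(u)-u\bigr)+\tfrac{1}{2}g''(u)\bigl(g^m(u)-u\bigr)^2+\cdots,
\]
an induction on $m$ at each prime of good reduction of $f$ yields the local identity
\[
v_p\bigl(f_c^n(u)-u\bigr)=v_p\bigl(f_c^{k_p}(u)-u\bigr).
\]
Summing $v_p\log p$ over primes contributing to the numerator of $f_c^n(u)-u$ and absorbing the finitely many primes of bad reduction of $f$ into a fixed error $E_f$ gives the telescoping inequality
\[
\log\,\mathrm{num}\bigl(f_c^n(u)-u\bigr)\;\leq\;\sum_{k\mid n,\,k<n}\log\,\mathrm{num}\bigl(f_c^k(u)-u\bigr)+E_f.
\]

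For the global input, the Call--Silverman machine gives $h\bigl(f_c^n(u)\bigr)=d^n\hat h_{f_c}(u)+O_f(1)$, with an error that can be made independent of $c$ by exploiting the one-parameter nature of $f_c=f+c$; a parallel elementary control of denominators (using that $\mathrm{denom}(f_c^n(u))\mid\mathrm{denom}(c)^{(d^n-1)/(d-1)}$) yields
\[
\log\,\mathrm{num}\bigl(f_c^n(u)-u\bigr)=d^n\hat h_{f_c}(u)+O_f\bigl(d^n\,h(c)+1\bigr).
\]
Inserting this into the telescoping inequality, and using the analogous upper bound on the right, produces the master estimate
\[
d^n\,\hat h_{f_c}(u)\;\leq\;\sigma(n)\,d^{n/2}\,\hat h_{f_c}(u)+O_f\bigl(d^n(h(c)+1)\bigr),
\]
which collapses to $n\leq\mathbf M_f$ as soon as $\hat h_{f_c}(u)\gg_f h(c)+1$, uniformly in $c$.

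The main obstacle is exactly the uniform lower bound $\hat h_{f_c}(u)\gg_f h(c)+1$ for $c\in\SS_{f,u}$. I expect to treat it in two regimes. For $h(c)$ large, standard escape estimates at the archimedean place, combined with the product-formula contributions from primes dividing the denominator of $c$, give $\hat h_{f_c}(u)\sim h(c)$, so the required inequality holds with implicit constant $\sim 1$. For $h(c)$ bounded, the parameter lies in a bounded region, and since $c\in\SS_{f,u}$ excludes preperiodicity of $u$, a compactness argument combined with continuity of $c\mapsto\hat h_{f_c}(u)$ away from preperiodic parameters yields a uniform positive lower bound. The latter step is delicate whenever $c$ approaches a preperiodic parameter; here the critical-orbit hypothesis is crucial again, since preperiodicity of a critical point is a rigid condition whose discreteness in the parameter space is what eventually produces the uniform constant $\mathbf M_f$ claimed in the theorem.
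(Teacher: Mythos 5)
Your framework is sound up through the telescoping inequality: the critical-point rigid-divisibility argument and the resulting bound $\log\,\mathrm{num}(f_c^n(u)-u)\leq\sum_{k\mid n,\,k<n}\log\,\mathrm{num}(f_c^k(u)-u)+E_{f,c}$ are essentially equivalent (after conjugating $u$ to $0$) to the input from Krieger's Lemma~2.3 and Corollary~2.4 that the paper cites, namely $A_n\mid\prod_{p\mid n}A_{n/p}$ whenever $n\in\calZ$. The height decomposition $\ln|A_n|=\ln B_n+\ln|g_c^n(c)|$ and the upper bounds via Lemma~\ref{lem:Bn<} also match your global estimate in spirit.

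The gap is in the final step, and it is not a small one. You reduce everything to the uniform lower bound $\hat h_{f_c}(u)\gg_f h(c)+1$ for $c\in\SS_{f,u}$, and for $c$ in a bounded region this becomes ``$\hat h_{f_c}(u)\gg_f 1$ uniformly.'' This is not accessible by the compactness-plus-continuity argument you sketch. The set of rational $c$ of bounded height is not compact, and nothing prevents rational parameters $c\in\SS_{f,u}$ from accumulating at preperiodic parameters, where $\hat h_{f_c}(u)=0$ and the canonical height (continuous as a function of $c\in\RR$) takes arbitrarily small positive values. Invoking discreteness of preperiodic parameters does not help: discreteness in $\CC$ does not give a quantitative gap in height away from them. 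A uniform positive lower bound on $\hat h_{f_c}(u)$ over all non-preperiodic rational $c$ is precisely the kind of dynamical Lehmer/Silverman-type statement that is notoriously hard and is not proved by this route.

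The paper avoids this obstruction entirely by a different mechanism. Instead of trying to bound $\hat h_{f_c}(u)$ from below for every $c$, Proposition~\ref{p:1} bounds, \emph{uniformly in $c$ near a fixed archimedean parameter $\alpha$}, the \emph{number} of iterates $n$ for which $g_c^n(c)$ can be small --- equivalently, for which $g_c^{n-N_0}(c)$ is dangerously close to a root of $g_c^{N_0}$. The count is supplied by Evertse's quantitative Roth theorem (Theorem~\ref{ever} / Corollary~\ref{ever1}), applied with a denominator bound coming from the growth of $B_n$ (Lemma~\ref{first}). Outside this bounded exceptional set, one gets a lower bound of the form $\ln|g_c^n(c)|\geq(-1+1/d)\ln B_n+\ln C$ --- note this is allowed to be negative --- which, combined with the denominator growth, still forces $\ln|A_n|$ to outpace $\sum_{p\mid n}\ln|A_{n/p}|$. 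The only compactness used is in the archimedean parameter interval $[-4L_g,4L_g]$, to glue finitely many such local statements (Propositions~\ref{P:1}--\ref{P:4}) into Proposition~\ref{thm:fund ineq}. You should replace the uniform canonical-height lower bound with an exceptional-set count of this kind; as written, the proposal does not close.
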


It is worth mentioning that Rice \cite{Rice2007} was the first to prove the finiteness of $\calZ(f,0)$ for each individual polynomial $f(x)\neq x^d$.
In \cite{DoerksenHaensch2011}, Doerksen--Haensch  prove Theorem~\ref{main thm3}
for the case that $f(x)=x^d$, $u=0$ and    $c\in\ZZ$, which is generalized by Krieger in 
\cite{Krieger2013} to every $c\in \QQ$, see \cite[Theorem 1.1]{Krieger2013}.
Our contribution is to prove Theorem~\ref{main thm3} for general polynomials which are not necessary to be monic nor integer. As we consider polynomials $f$ that are more complicated than $x^d$, we did not aim to get the sharpest uniform bound $\mathbf{M}_f$.

\begin{definition}	
	A polynomial $g(x)\in\QQ[x]$  is called \textbf{$x^2$-divisible} if it has degree $d\geq2$ and is of the form \[g(x)=u_dx^d+\cdots+u_2x^2\in\QQ[x].\]
\end{definition}

At the last section we will prove that the following theorem implies Theorem~\ref{main thm3}.

\begin{theorem}\label{main thm 2}
	Given an  $x^2$-divisible  $g(x)\in\ZZ[x]$  of degree $d\geq3$ there is a constant $\mathbf{M}_g>0$, depending only on $g$, such that
	\[\#\calZ(g_c,0)\leq \mathbf{M}_g\]  for every $c\in \SS_{g}$.
\end{theorem}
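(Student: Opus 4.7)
The plan is to exploit the critical-orbit structure at $0$---simultaneously a fixed point and a critical point of $g$ since $g(0)=0=g'(0)$, both consequences of $x^2$-divisibility---to derive a sharp non-archimedean rigidity for the orbit $a_n := g_c^n(0)$, and to match it against an archimedean growth estimate.

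The key algebraic input comes from the chain rule: $(g_c^m)'(0) = \prod_{k=0}^{m-1} g_c'(a_k) = 0$ for every $m\geq 1$ (since the $k=0$ factor vanishes), so $g_c^m(x)-g_c^m(0)$ is divisible by $x^2$ in $\ZZ[c][x]$. Specializing at $x=a_n$ yields
\[
a_{n+m}-a_m \;=\; a_n^2\cdot P_m(a_n), \qquad P_m\in\ZZ[c][x].
\]
Using this identity, for each prime $p$ with $v_p(c)\geq 0$ I would let $n_0=n_0(p)$ denote the minimal $n\geq 1$ (if any) with $v_p(a_n)>0$. A mod-$p$ periodicity argument shows that $v_p(a_n)>0$ if and only if $n_0\mid n$, while the identity above applied inductively yields $v_p(a_{kn_0})=v_p(a_{n_0})$ for every $k\geq 1$, because $v_p(a_{(k+1)n_0}-a_{kn_0})\geq 2v_p(a_{n_0})>v_p(a_{n_0})$. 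Primes $p$ dividing the denominator of $c$ instead have $v_p(a_n)<0$ for all $n\geq 1$ and never appear in the numerator. Consequently, for $n\in\calZ(g_c,0)$, every prime $p$ with $v_p(a_n)>0$ has $n_0(p)$ a proper divisor of $n$ and $v_p(a_n)=v_p(a_{n_0(p)})$. Setting $N_m := \sum_p\max(0,v_p(a_m))\log p$, summation yields the recursive bound
\[
N_n \;\leq\; \sum_{m\mid n,\; m<n} N_m.
\]

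Against this I would put an archimedean lower bound: by the Call--Silverman machinery, $h(a_n) = d^n \hat h_{g_c}(0)+O_{g_c}(1)$, with $\hat h_{g_c}(0)>0$ since $c\in\SS_g$ makes $0$ non-preperiodic. After accounting for the denominators of $a_n$ (which divide $\beta^{d^{n-1}}$ where $c=\alpha/\beta$ in lowest terms), this transfers into $N_n\geq \alpha_c d^n-\beta_c$ with a compatible upper bound $N_m\leq \alpha_c d^m+\beta_c$. Since the largest proper divisor of $n$ is at most $n/2$, the recursive bound forces $\alpha_c d^n \leq O\bigl(\tau(n)d^{n/2}(\alpha_c+\beta_c)\bigr)$, bounding $n$ in terms of the ratio $\beta_c/\alpha_c$.

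The principal obstacle is to make this bound \emph{uniform} in $c$. For $h(c)\geq H_g$, where $H_g$ is a threshold depending only on $g$, a height comparison on the one-parameter family $g+t$---of the form $\hat h_{g+t}(0)\asymp h(t)$ with constants depending only on $g$---ensures that $\alpha_c$ and $\beta_c$ scale comparably, bounding $\beta_c/\alpha_c$ in terms of $g$ alone. For $h(c)<H_g$ only finitely many rationals $c$ occur, and Rice's theorem \cite{Rice2007} applied to each produces an individual finite bound on $\#\calZ(g_c,0)$; taking the maximum yields the uniform constant $\mathbf M_g$.
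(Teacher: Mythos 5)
Your $p$-adic rigidity argument is correct and is a nice self-contained rederivation of the divisibility input that the paper imports from Krieger (Lemma~2.3 and Corollary~2.4 of \cite{Krieger2013}). Writing $g_c^m(x)-g_c^m(0)=x^2P_m(x)$ with $P_m\in\ZZ[c][x]$ (valid because $g$ is $x^2$-divisible), the identity $a_{n+m}-a_m=a_n^2P_m(a_n)$ really does give $v_p(a_{kn_0})=v_p(a_{n_0})$ for every $k\geq1$, and the mod-$p$ periodicity gives $v_p(a_n)>0\iff n_0(p)\mid n$. This yields $\log|A_n|\le\sum_{m\mid n,\,m<n}\log|A_m|$ whenever $n\in\calZ(g_c,0)$, which is slightly weaker than the paper's $A_n\mid\prod_{p\mid n}A_{n/p}$ but would suffice if the rest worked. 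The high-level shape of the proof --- a $p$-adic recursion contradicting archimedean growth --- is the same as the paper's.

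The gap is in the archimedean step. You assert that $h(a_n)=d^n\widehat h_{g_c}(0)+O_{g_c}(1)$ ``transfers into $N_n\geq\alpha_c d^n-\beta_c$'' with $\alpha_c>0$. This does not follow. Since $N_n=\log|A_n|$ and $h(a_n)=\log\max(|A_n|,B_n)$, the only unconditional consequence is $\log|A_n|\geq h(a_n)-\log B_n$, and one can compute $\log B_n=d^n\sum_p\hat\lambda_{g_c,p}(0)+O(1)$. So the lower bound one actually gets is $\log|A_n|\gtrsim d^n\hat\lambda_{g_c,\infty}(0)$, and when the orbit of $0$ is bounded in $\CC$ --- which happens for an unbounded family of $c\in\SS_g$, e.g.\ small non-integral $c$ --- the archimedean local height $\hat\lambda_{g_c,\infty}(0)$ is \emph{zero} and your lower bound degenerates to $O(1)$. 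In that regime $\log|A_n|=\log B_n+\log|a_n|$ with $\log|a_n|$ possibly very negative (the orbit can repeatedly approach $0$, a critical point), and controlling this is a genuine Diophantine approximation problem: one must show that $a_{n-N}$ cannot too often be too close to the algebraic preimages of $0$ under $g_c^N$. This is precisely what the paper handles via Evertse's quantitative Roth-type theorem (Theorem~\ref{ever}, Corollary~\ref{ever1}, Proposition~\ref{p:1}), with iterate depth $N_0$ chosen large to beat critical multiplicities. Your outline never engages with this, and the ``height comparison $\hat h_{g_c}(0)\asymp h(c)$'' controls only the leading coefficient of $h(a_n)$, not the size of the archimedean remainder $\log|a_n|$. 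The low-height finiteness via Rice's theorem and a finite maximum is fine, but it only covers a compact part of $\SS_g$ and does not rescue the unbounded bounded-orbit locus.
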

%------------------------

Note that one can give an explicit expression of the lower bound $\mathbf{M}_g$ when combining the decomposition of $\SS_g$ in Proposition~\ref{thm:fund ineq} with Propositions~\ref{P:1}, \ref{P:2}, \ref{P:3} and \ref{P:4}.

%\begin{remark}
%    One can think of $L_g$ as an indicator of the complexity of the monic polynomial $f$. Then our theorem says that as $L_g\rightarrow\infty$ the order of growth of for $\#\calZ(f_c,0)$ is $\ll L_g$ uniformly for $c\in\ZZ$, and more interestingly, is $\ll\log\ln L_g$ uniformly for $c\in\QQ-\ZZ$. It is clear that many of the results in \S2 could be instantly improved if one considers the dependence on $\deg f=d$.
%\end{remark}
%
%\noindent As the reader will see, our assumptions on $f$ make the dynamics of $f_c$ more tractable. It would be interesting to see if this is true for other classes of polynomials, which might require new ideas and methods.
%\vskip 1ex
%\begin{proposition}\cite[Theorem~1.1]{Krieger2013}
%	Theorem~\ref{main thm} holds for $d=2$.
%\end{proposition}
%Therefore, we may assume that $d\geq 3$.

This paper is inspired by Krieger's work in \cite{Krieger2013}. We generalize her result from the special polynomial $f(x)=x^d$ to arbitrary polynomials in $\QQ[x]$. We first address the difficulties  on this generalization as follows.

%Moreover, for each germ one can write the uniform upper bound of $\#\calZ(c,0)$ as a function of $L_g$ and $u_d$ by tracing the proof in this paper.

The first difficulty is from dealing with the non-monic case, in which the denominator $f_c^n(c)$ is no longer always equal
to $d^n$'s power of the denominator of $c$.
To conquer it, we introduce a factorization of an integer with respect to the leading term $u_d$ of $f$, see (\ref{Eq1}), which allows us to focus on the major factor of the denominator of  $f_c^n(c)$. 

The second difficulty is from the critical points of large multiplicities. Due to this reason, some arguments in \cite{Krieger2013} do not work for our case. For example, Krieger uses Mahler's theorem to control $|f_c^n(c)|$ by $|f_c^{n-1}(c)-f_c^{-1}(0)|$. However, this estimation might not be enough when $f_c^{n-1}(c)$ is very close to a critical point with large multiplicity.
It forces us to control $|f_c^n(c)|$ in Proposition~\ref{p:1} by $|f_c^{n-N}(c)-f_c^{-N}(0)|$ for some relatively large $N>1$.

\subsection*{Acknowledgment}
The author would like to thank Tom Tucker and Shenhui Liu for their valuable discussions.

\section{Introduction of Proposition~\ref{thm:fund ineq} and some estimates}\label{s2}
We split this section into two parts. In the first part, we introduce our main technical result Proposition~\ref{thm:fund ineq} whose proof will be given in \S\ref{s32}, and prove that it implies  Theorem~\ref{main thm 2}. In the second part, we focus on estimating $\ln |A_{n}|$ which appears in Proposition~\ref{thm:fund ineq}.

Let us first set conventions and introduce some notations.
\begin{enumerate}
	  \renewcommand{\theenumi}{(\arabic{enumi})}
	\item We set $\NN\coloneqq \{1,2,\dots\}$ to be the set of natural numbers and for every $n\in \NN$ we denote by $[n]$ the finite set $\{1,2,\dots,n\}$.
	\item We denote by $\Sigma$ the set of finite primes of $\ZZ$ and reserve $p$ for a prime number. For every $n\in \ZZ\backslash\{0\}$ the sum $\sum_{p|n}$ and the product $\prod_{p|n}$ are taken over all its distinct prime factors whose number is denoted by $\omega(n)$.
\end{enumerate}
We will always write an $x^2$-divisible $g(x)\in \ZZ[x]$ by $g(x)=u_dx^d+\cdots+u_2x^2\in\ZZ[x]$, and define its \textbf{length} by \[L_g\coloneqq 1+\sum_{i=2}^{d-1}|u_i|/|u_d|. \]

For every $x^2$-divisible  $g(x)\in \ZZ[x]$, $c\in \QQ$ and $n\geq0$ we write the $(n+1)$-st iteration $g_c^{n+1}(0)$ as
\[
g_c^{n+1}(0)=g^n_c(c)\coloneqq \frac{A_{n}}{B_{n}},
\]
where $B_{n}>0, A_n$ are coprime and both depend on $g$ and $c$.
Clearly, we have $c=g_c(0)=\f{A_{0}}{B_{0}}$.

%For $c=a/b\in\QQ$, we work with $g_c^{n+1}(0)=g_c^n(c)$ and write $b=\prod_{p}p^{R_{p}}$ and
%$$
%g_c^{(n)}(c)=\frac{A_n}{B_n}\quad\mbox{with}\quad \gcd(A_n,B_n)=1.
%$$

\begin{definition}
	For an $x^2$-divisible  $g(x)\in \ZZ[x]$ and a set $S$ in $\SS_g$ we call that $g$ has \textbf{rapidly increasing numerators} on $S$ if there exists an integer $N>0$ such that 
	for every $c\in S$  there is a finite set $J_c$ with $\#J_c\leq N$ such that for every $n\notin J_c$ we have
	\begin{equation}\label{eq:2}
	\ln |A_{n}|> \sum_{p|n}\ln \left|A_{n/p}\right|.
	\end{equation}
\end{definition}

We now state our main proposition, which is followed by the proof of Theorem~\ref{main thm 2}.
%To bound $\calZ(g_c,0)$, we establish the following fundamental inequality.
\begin{proposition}[Main Proposition]\label{thm:fund ineq}
	Every $x^2$-divisible  $g(x)\in \ZZ[x]$ has rapidly increasing numerators on $\SS_g$.
\end{proposition}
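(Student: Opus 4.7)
The plan is to prove Proposition~\ref{thm:fund ineq} by decomposing $\SS_g$ into finitely many strata and establishing the rapidly-increasing-numerators property on each stratum separately; taking the uniform bound $N$ to be the maximum of the stratum-wise bounds then yields the proposition. This matches the decomposition the author foreshadows after the statement of Theorem~\ref{main thm 2}, with Propositions~\ref{P:1}--\ref{P:4} handling the four pieces of the partition, and with the threshold between strata chosen in terms of $|u_d|$, $L_g$, and $B_0$.

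The heuristic driving every stratum is that the recursion $g_c^{n+1}(0)=g(g_c^n(0))+c$ propagates denominators in a controlled way (roughly $B_n$ divides $B_0^{(d^n-1)/(d-1)}$ up to a factor depending only on $g$) while forcing the numerators to grow doubly exponentially: $\ln|A_n|$ should behave like $d^n\cdot h$ for some positive $h=h(g,c)$ --- an archimedean escape rate in the large-$|c|$ regime and a canonical height in the bounded regime. Granting this, the inequality (\ref{eq:2}) reduces to the elementary numerical comparison
\[
\sum_{p\mid n}\ln|A_{n/p}| \;\leq\; \omega(n)\,h\,d^{n/2} + O_g(\omega(n)) \;\ll\; h\,d^n \;\asymp\; \ln|A_n|,
\]
which holds as soon as $n$ exceeds a threshold depending only on $d$ (since $\omega(n)=O(\log n)$) and on a positive lower bound for $h$. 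The exceptional set $J_c$ then consists of finitely many small $n$, bounded uniformly in $c$ provided $h$ is bounded below uniformly on the stratum.

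For the escaping stratum this is immediate from telescoping the elementary bound $|g_c^{n+1}(0)|\geq\tfrac12|u_d|\,|g_c^n(0)|^d$, valid once $|g_c^m(0)|$ exceeds a threshold $R_g$ depending only on $|u_d|$ and $L_g$; here $h$ is large, not just bounded below. For the bounded stratum I would invoke the canonical height $\hat h_{g_c}$: the hypothesis $c\in\SS_g$ says $0$ is not $g_c$-preperiodic, so $\hat h_{g_c}(0)>0$, and the standard comparison $h(g_c^n(0))=d^n\hat h_{g_c}(0)+O_g(1)$ transfers growth of the canonical height to growth of $|A_n|$ once the denominator is controlled; a Northcott-type statement over the compact parameter region defining the stratum supplies the required uniform lower bound on $\hat h_{g_c}(0)$.

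The hard part will be the intermediate regime, exactly as the introduction flags: when $g_c^{n-1}(c)$ sits very close to a preimage of $0$ of high ramification, the one-step Mahler bound $|g_c^n(c)|\gtrsim|g_c^{n-1}(c)-g_c^{-1}(0)|^m$ loses a factor equal to the multiplicity and can destroy the $d^n$ growth of $\ln|A_n|$. Following the prescription announced in the introduction, I would replace the one-step bound by an $N$-step bound with $N=N(g)$ fixed large enough that the points of $g^{-N}(0)$ are generically unramified, comparing $|g_c^n(c)|$ to $|g_c^{n-N}(c)-g_c^{-N}(0)|$ and applying a Mahler-type estimate for the $N$-step displacement. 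This refinement is the technical heart of the argument; once it is in place, the stratum-wise assembly and the numerical comparison above combine without further difficulty.
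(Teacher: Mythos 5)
Your stratification of $\SS_g$---an escaping region, a finite set of bounded parameters with small denominator, and bounded parameters with nontrivial denominator---matches the cover the paper assembles at the end of Section~3, and your treatment of the escaping stratum by telescoping $|g_c^{n+1}(0)|\gtrsim|g_c^n(0)|^d$ is precisely what Proposition~\ref{P:1} does via Corollary~\ref{lemma f>}.

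The bounded-parameter stratum, however, has two gaps. First, the claim that ``a Northcott-type statement over the compact parameter region\ldots supplies the required uniform lower bound on $\hat h_{g_c}(0)$'' is not correct as written: the canonical height $\hat h_{g_c}(0)$ tends to $0$ as $c$ approaches a postcritically finite parameter, so compactness of a real interval alone gives no positive lower bound. What your route actually requires is an Ingram-type inequality $\hat h_{g_c}(0)\geq\epsilon\,h(c)-C_g$, pushing the bad locus into a set of bounded height and hence a finite set; such a bound is the crux of Krieger's argument for $z^d+c$ but is nontrivial to establish for a general $x^2$-divisible $g$, and you have not supplied it. Second, and more decisively, the ``$N$-step Mahler estimate'' you invoke controls only the \emph{size} of $|g_c^n(c)|$ given that $g_c^{n-N}(c)$ lies near a root of $g_c^{N}$; it says nothing about \emph{how many} $n$ this proximity can occur for, which is exactly what must be bounded uniformly in $c$ to get the constant $N$ in the definition of rapidly increasing numerators. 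The paper obtains this uniform count from Evertse's Diophantine approximation theorem (Theorem~\ref{ever}), via Corollary~\ref{ever1} and Proposition~\ref{p:1}: the number of indices $n$ for which $g_c^{n-N_0}(c)$ is a good rational approximant to a root of $g_c^{N_0}$ is at most $W(d^{N_0},1/10)$, independent of $c$, once the denominator has grown past a computable threshold. Your sketch has no ingredient playing this role, and without it the exceptional set $J_c$ has no $c$-uniform cardinality bound.

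It is also worth noting where the growth of $\ln|A_n|$ actually comes from in the paper: not from an archimedean escape rate or a canonical-height lower bound, but from the denominator growth $\ln B_n\geq\frac{d^{n-n'}}{3}\ln\widehat{B_{n'}}$ of Lemma~\ref{first}; the archimedean term $\ln|g_c^n(c)|$ is merely prevented from being very negative too often (Proposition~\ref{p:1}). This decoupling is what lets the paper bypass the canonical-height lower bound that your outline depends on.
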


\begin{proof}[Proof of Theorem~\ref{main thm 2} in assuming proposition~\ref{thm:fund ineq}]
	By \cite[Lemma~2.3 and Corollary~2.4]{Krieger2013}, if $n \in \calZ(g_c,0)$, then
	$A_n \;|\;\prod_{p|n}A_{n/p}
	$
	and hence
	\begin{equation*}
	\ln |A_{n}|\leq \sum_{p|n}\ln \left|A_{n/p}\right|.
	\end{equation*}
	Together with Proposition~\ref{thm:fund ineq}, this finishes the proof.
\end{proof}

%\begin{definition}
%	We call a pair of reduced rational numbers $\frac{\alpha}{\beta}$ and
%	$c=a/b$ \textbf{compatible} if they satisfies
%	\begin{itemize}
%		\item  $\ord_{q_j}(b)<ds_j$ for any $1\leq j\leq m$,
%		\item $\ord_{p_i}(b)<(d-1)R_i$ for any $i\in I_\beta$,
%		\item $\ord_{p_i}(b)\leq R_i$ for any $i\notin I_\beta$.
%	\end{itemize}
%\end{definition}

The naive idea of proving Proposition~\ref{thm:fund ineq} is to give a lower bound for $\ln|A_n|$ and an upper bound for $\prod_{p|n}|A_{n/p}|$ such that the lower bound is always greater than the upper bound when $n$ is large enough. Consider that \begin{equation}\label{eqq:1}
\ln |A_{n}|=\ln B_{n}+\ln |g_c^n(c)|.
\end{equation} It is sufficient for us to control $\ln B_{n}$ and $\ln |g_c^n(c)|$.

%
%
%The strategy of proving Proposition~\ref{thm:fund ineq} is that for general $n\geq 1$ 
%we provide an upper and lower bound for $\ln |A_{n}|$ as functions of $f$, $c$ and $n$. The upper bound will control the right hand side of \eqref{eq:2}, while the upper one controls the left hand side. Moreover, consider that 

\subsection{Upper bounds for $\ln B_n$ and $\ln |g_c^n(c)|$}

\begin{lemma}\label{lem:Bn<}
Given any $x^2$-divisible $g(x)\in \ZZ[x]$ and $c\in \QQ$,	for every $n\geq 0$ we have
	%\begin{equation}\label{Bn<}
	\begin{enumerate}
		  \renewcommand{\theenumi}{(\arabic{enumi})}
		\item $\ln B_{n}\leq  {d^n}\ln B_0;$
		\item $	\ln|g_c^{n}(c)|\leq d^{n}\ln \Big(2|u_d|\max\left\{|c|,4L_g\right\}\Big).$
	\end{enumerate}
	%\end{equation}	
\end{lemma}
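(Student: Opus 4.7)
The plan is to prove both parts by induction on $n \geq 0$, using the recursive structure $g_c^{n+1}(c) = g(g_c^n(c)) + c$.

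For part (1), I would show by induction that for every prime $p$,
\[
v_p(B_{n+1}) \leq \max\bigl(d\cdot v_p(B_n),\,v_p(B_0)\bigr).
\]
Writing $g_c^n(c) = A_n/B_n$ and $c = A_0/B_0$ in lowest terms, the expansion
\[
g_c^{n+1}(c) = \frac{u_d A_n^d + u_{d-1} A_n^{d-1}B_n + \cdots + u_2 A_n^2 B_n^{d-2}}{B_n^d} + \frac{A_0}{B_0}
\]
shows that the $p$-adic valuation of the first summand is at least $-d\cdot v_p(B_n)$ (using $\gcd(A_n,B_n)=1$) and that of the second is at least $-v_p(B_0)$. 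Iterating yields $v_p(B_n) \leq d^n v_p(B_0)$, so $B_n$ divides $B_0^{d^n}$, and taking logarithms gives (1).

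For part (2), write $y_n \coloneqq |g_c^n(c)|$ and $R \coloneqq 2|u_d|\max(|c|, 4L_g)$, noting the two useful inequalities $R \geq 2|u_d||c|$ and $R \geq 8|u_d|L_g$ that will supply the crucial slack. The aim is to prove $y_n \leq R^{d^n}$ by induction on $n$. The basic one-step estimate is
\[
y_{n+1} \leq \sum_{i=2}^d |u_i| y_n^i + |c| \leq |u_d| L_g \max(y_n^d, y_n^2) + |c|,
\]
using $\sum_{i=2}^d |u_i| = |u_d| L_g$. The base case $y_0 = |c| \leq R$ is immediate. For the inductive step I would split by the size of $y_n$: when $y_n \geq 4L_g$, the lower-order terms of $g$ become negligible and one has the sharper estimate $|g(y_n)| \leq |u_d|y_n^d(1 + (L_g-1)/y_n) \leq \tfrac{5}{4}|u_d|y_n^d$, which together with $|c|$ absorbed into the multiplicative factor yields $y_{n+1} \leq \tfrac{3}{2}|u_d|y_n^d$; when $y_n < 4L_g$, the sequence remains bounded by a constant depending only on $g$, which is in turn dominated by $R^{d^n}/(2|u_d|)$.

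The main obstacle is closing the induction cleanly. A naive use of $y_n \leq R^{d^n}$ in the crude estimate $y_{n+1} \leq |u_d|L_g y_n^d + |c|$ overshoots $R^{d^{n+1}}$ by a factor of order $R/8$. The fix is to propagate the slightly stronger inductive bound $y_n \leq R^{d^n}/(2|u_d|)$, whose induction closes via the numerical inequality $\tfrac{3}{2}|u_d| \leq (2|u_d|)^{d-1}$ (valid for every $d \geq 2$ and $|u_d| \geq 1$) in the large regime, and by direct verification in the small regime. It is precisely the constants $2$ and $4$ chosen in the definition of $R$ that supply the needed slack.
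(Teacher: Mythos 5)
Your proof of part (1) is essentially the paper's: both conclude $B_n \mid B_0^{d^n}$ from the shape of the recursion, and taking logarithms finishes it. That part is fine.

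For part (2), the inductive target $y_n \le R^{d^n}/(2|u_d|)$ with $R = 2|u_d|\max\{|c|,4L_g\}$ is the right object, but the way you try to close the induction has a genuine gap. In the ``large regime'' $y_n \ge 4L_g$, you write $y_{n+1} \le \tfrac{5}{4}|u_d|y_n^d + |c|$ and then claim $|c|$ can be absorbed to get $y_{n+1}\le \tfrac{3}{2}|u_d|y_n^d$; that absorption requires $|c|\le \tfrac14|u_d|y_n^d$, which you have no way to guarantee. Because of the cancellation in $g(y_{n-1})+c$, one can have $y_n$ only slightly above $4L_g$ while $|c|$ is astronomically large (e.g.\ $|c| \gg 4^{d-1}|u_d|L_g^d$), and then $|c|>\tfrac14|u_d|y_n^d$ and your one-line inequality fails. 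The ``small regime'' is likewise not closed: the crude bound you invoke, $y_{n+1}\le |u_d|L_g y_n^d + |c| < |u_d|L_g(4L_g)^d + |c|$, exceeds $R^{d^{n+1}}/(2|u_d|)$ when $L_g$ is large (the left side scales like $L_g^{d+1}$, the right like $L_g^d$), and the quantity is not ``a constant depending only on $g$'' since it involves $|c|$. The root of both problems is that you compare $g$ to the actual iterate $y_n$, which you cannot control from below.

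The paper sidesteps all of this by plugging the inductive \emph{upper} bound itself, $T_n \coloneqq (2|u_d|)^{(d^n-1)/(d-1)}\big(\max\{|c|,4L_g\}\big)^{d^n}$, into the triangle-inequality estimate: since $T_n$ is increasing and $T_0 = \max\{|c|,4L_g\}$, one always has $T_n \ge 4L_g$, $T_n\ge L_g$, and $T_n\ge |c|$. Then
\[
|g_c^{n+1}(c)| \le |u_d|T_n^d + \sum_{i=2}^{d-1}|u_i|T_n^i + |c| \le |u_d|(T_n + L_g)T_n^{d-1} \le 2|u_d|\,T_n^d = T_{n+1},
\]
with $|c|$ absorbed via $|c|\le |u_d|T_n^{d-1}$ and $L_g$ absorbed via $L_g\le T_n$. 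No case split is needed, and the exponent $(d^n-1)/(d-1)$ on $2|u_d|$ is in fact sharper than your $d^n-1$. You should rework part (2) along these lines, comparing against $T_n$ rather than $y_n$.
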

\begin{proof}
	(1) Since $g_c^n(c)$ can be written as $A_n'/B_0^{d^{n}}$ for some $A_n'\in \ZZ$, we have
	$B_{n}| B_0^{d^n}$.
	
	(2)	It is enough to prove 
	\begin{equation*}
	|g_c^{n}(c)|\leq (2|u_d|)^{\frac{d^n-1}{d-1}} \Big(\max\left\{|c|,4L_g\right\}\Big)^{d^{n}}\quad \textrm{for all~}n\geq 0.
	\end{equation*}	
	For $n=0$, we have $	|g_c^{0}(c)|=|c|\leq \!\max\left\{|c|,4L_g\right\}.$
	
	Assume that the desired inequality holds for some $n\geq 0$ and temporarily denote its right side by $T_{n}$. Then we have
	\begin{multline*}
	|g^{{n}+1}_c(c)|\leq|u_d||g_c^{n}(c)|^d+\sum_{i=2}^{d-1}|u_i||g_c^{n}(c)|^i+|c|\leq |u_d|(T_{n}+L_g) T_{n}^{d-1}\\
	\leq |u_d|\cdot 2T_{n} \cdot T_{n}^{d-1}=
	(2|u_d|)^{\frac{d^{n+1}-1}{d-1}} \Big(\max\left\{|c|,4L_g\right\}\Big)^{d^{n+1}}.
	\end{multline*}
	The proof follows by induction.
\end{proof}

\subsection{A lower bound for $\ln B_n$}
Consider that 	\begin{equation}
\label{AkBk}\f{A_{n+1}}{B_{n+1}}=g_c^{n+1}(c)=g_c\!\left(\f{A_{n}}{B_{n}}\right)=\sum_{i=2}^d u_i\frac{A_{n}^i}{B_n^i}+ \frac{A_0}{B_0}.
\end{equation}

\begin{lemma}\label{l1:1}
Given any $x^2$-divisible $g(x)\in \ZZ[x]$ and $c\in \QQ$,	for every $n\geq 0$ if $p\in I(B_n)$, then we have
	\begin{enumerate}
		\item $p\in I(B_{n+1})$ and
		\item $\vp(B_{n+1})=d\vp(B_n)-\vp(u_d).$
	\end{enumerate}
\end{lemma}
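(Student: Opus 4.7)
The plan is to compute $v_p(B_{n+1})$ directly from the recursion (\ref{AkBk}) by clearing denominators and tracking which term in the numerator carries the smallest $p$-adic valuation. First I would rewrite (\ref{AkBk}) over the common denominator $B_0B_n^d$, obtaining
\[
\frac{A_{n+1}}{B_{n+1}}=\frac{B_0\,u_d\,A_n^d+B_0\sum_{i=2}^{d-1}u_iA_n^iB_n^{d-i}+A_0B_n^d}{B_0B_n^d}.
\]
The $x^2$-divisibility of $g$ is used here in an essential way: because the summation starts at $i=2$, every ``middle'' monomial picks up at least one extra factor of $B_n$ beyond what the leading monomial carries, and the ``free'' contribution from $c$ comes with the full $B_n^d$.

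Next I would set $e\coloneqq v_p(B_n)$ and $a\coloneqq v_p(u_d)$, use $\gcd(A_n,B_n)=1$ so that $v_p(A_n)=0$, and compute the $p$-adic valuations of the three groups of terms in the numerator: the leading term $B_0u_dA_n^d$ has valuation $v_p(B_0)+a$; each middle term has valuation $\ge v_p(B_0)+(d-i)e\ge v_p(B_0)+e$; and the $A_0B_n^d$ term has valuation $v_p(A_0)+de$. The hypothesis $p\in I(B_n)$, together with the factorization recalled in (\ref{Eq1}), forces $a<e$, making the leading term strictly smaller than every middle term. To compare it with $A_0B_n^d$ I would first show by a short induction on $n$ that $p\mid B_n$ implies $p\mid B_0$: if neither $p\mid B_k$ nor $p\mid B_0$ held, the common-denominator form above would have no $p$ in its denominator, contradicting $p\mid B_{k+1}$. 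Once $p\mid B_0$, coprimality of $A_0$ and $B_0$ gives $v_p(A_0)=0$, and an iterated application of the formula in (2) inductively yields $v_p(B_0)+a<de$.

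Combining these estimates, the numerator has $p$-adic valuation exactly $v_p(B_0)+a$ while the denominator has valuation $v_p(B_0)+de$; after cancellation $v_p(A_{n+1}/B_{n+1})=a-de<0$, and because $A_{n+1},B_{n+1}$ are coprime this forces $v_p(A_{n+1})=0$ and $v_p(B_{n+1})=de-a=dv_p(B_n)-v_p(u_d)$, proving (1) and (2) simultaneously. The main obstacle is isolating the unique term in the expanded numerator that attains the minimum $p$-adic valuation; once the $x^2$-divisibility of $g$ and the defining inequality $v_p(B_n)>v_p(u_d)$ of $I(B_n)$ are in hand, the dominance of $B_0u_dA_n^d$ is forced and no further arithmetic subtlety remains.
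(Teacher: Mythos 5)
Your overall approach matches the paper's: clear denominators in the recursion, compare $p$-adic valuations of the three groups of terms, and show the leading term $B_0u_dA_n^d$ is strictly dominant, so that the numerator of the reduced fraction has valuation $v_p(B_0)+a$ and the denominator has valuation $v_p(B_0)+de$. Part (1), the dominance over the middle terms, and the cancellation computation are all fine. The gap is in the comparison with the constant-term contribution $A_0B_n^d$, i.e.\ in establishing $v_p(B_0)+a<de$.

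You justify this inequality by ``an iterated application of the formula in (2).'' But (2), read as a recursion $v_p(B_{k+1})=dv_p(B_k)-a$, is only valid under the hypothesis $p\in I(B_k)$, i.e.\ $v_p(B_k)>a$. Your induction starts from the hypothesis $p\in I(B_n)$ for the current $n$ alone, and you have established only $p\mid B_0$, not $p\in I(B_0)$. These are not the same: it is perfectly possible to have $0<v_p(B_0)\le a$ while $v_p(B_n)>a$ for some $n\ge 1$ (e.g.\ $g(x)=px^3$, $v_p(B_0)=1$, $a=1$ gives $v_p(B_1)=2$), and part (1) of the lemma only propagates membership in $I(\cdot)$ forward, never backward. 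So when $p\mid B_0$ but $p\notin I(B_0)$, the chain of applications of (2) never gets started, and your proof stalls. The paper closes this by a case split: if $p\notin I(B_0)$ then $v_p(B_0)\le a<e$ gives $v_p(B_0)+a<2e\le de$ outright; if $p\in I(B_0)$ then the strong induction hypothesis does yield $p\in I(B_k)$ for all $k\le n$ and $v_p(B_n)=d^nv_p(B_0)-a\sum_{i<n}d^i\ge v_p(B_0)$, after which $v_p(B_0)+a\le e+a<2e\le de$ as you intend. Adding that missing first case (and noting that the paper in fact never needs $p\mid B_0$ at all, since $v_p(A_0/B_0)\ge -v_p(B_0)$ holds unconditionally) completes your argument.
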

\begin{proof}
	Note that for every $n\geq 0$ we have
	\begin{align}\label{eqq6}
	\vp\left(\frac{u_dA_{n}^d}{B_n^d}\right)&=\vp(u_d)+d\vp(A_n)-d\vp(B_n),\\
	\vp\left(\frac{u_iA_{n}^i}{B_n^i}\right) &\geq (1-d)\vp(B_n),\quad i=2,\ldots,d-1, \\
	\vp(\f{A_0}{B_0}) &\geq -\vp(B_0).
	\end{align}
	Therefore, if $p\in I(B_n)$, then we have
	$\vp(A_n)=0$	and 
	\begin{equation}\label{eq1}
	\vp\left(\frac{u_dA_{n}^d}{B_n^d}\right)<(1-d)\vp(B_n)\leq		\vp\left(\frac{u_iA_{n}^i}{B_n^i}\right)
	\end{equation}
	for every $2\leq i\leq d-1$. Note that the term on the right hand side of this inequality does not exist for the case $d=2$.
	
	We now prove this lemma by induction. 
	
	For $n=0$ we have $\vp(B_0)>\vp(u_d)$. Combined with (\ref{AkBk}) and (\ref{eq1}), this implies
	\[\vp\left(\f{A_{1}}{B_{1}}\right)=	\vp\left(\frac{u_dA_{0}^d}{B_0^d}\right)=\vp(u_d)-d\vp(B_0)<-\vp(u_d)\]
	and hence $p\in I(B_1)$.
	
	Now we assume that this lemma holds for every $0\leq k\leq n$. 
	
	(1) If $p\not\in I(B_0)$, we have \[\vp(B_0)\leq \vp(u_d)<\vp(B_n).\] Combined with  (\ref{AkBk}) and (\ref{eq1}), this implies
	\begin{equation}\label{eq2}
	\vp\left(\f{A_{n+1}}{B_{n+1}}\right)=	\vp\left(\frac{u_dA_{n}^d}{B_n^d}\right)=\vp(u_d)-d\vp(B_n)<-\vp(u_d)
	\end{equation}
	and hence $p\in I(B_{n+1})$.
	
	(2) 	If $p\in I(B_0)$, then by induction, we have $p\in I(B_n)$ and \[\vp(B_n)=d^{n}\vp(B_0)-\vp(u_d)\sum_{i=0}^{n-1}d^i\geq \vp(B_0).\] 
	Combining (\ref{AkBk}) with (\ref{eq1}), we also obtain (\ref{eq2}). This completes the induction. 
\end{proof}

For every $a\in \ZZ$ 
we denote \[I(a)\coloneqq \{p\in \Sigma\mid \vp(a)>\vp(u_d)\},\] and put
\begin{equation}\label{Eq1}
\widehat a\coloneqq\prod\limits_{p\in I(a)}p^{\vp(a)}.
\end{equation} 
When $I(a)$ is empty, we put $\widehat{a} \coloneqq 1$.
Note that we always have 
\begin{equation}\label{Eq2}
|a|\leq |u_d\widehat{a}|.
\end{equation}

\begin{lemma}\label{first}
Given any $x^2$-divisible $g(x)\in \ZZ[x]$ of degree $d\geq 3$ and $c\in \QQ$,	for every $0\leq n'\leq n$ we have
	\[\ln B_{n}\geq \frac{d^{n-n'}}{3}\ln \widehat{B_{n'}}.\]
\end{lemma}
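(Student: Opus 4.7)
The plan is to exploit Lemma~\ref{l1:1} iteratively to get an exact formula for $\vp(B_n)$ in terms of $\vp(B_{n'})$ whenever $p\in I(B_{n'})$, and then show that the correction term is small enough compared to the main term when $d\ge 3$.

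First, I would fix a prime $p\in I(B_{n'})$. By Lemma~\ref{l1:1}(1), applied inductively from $n'$ up to $n$, we have $p\in I(B_k)$ for every $n'\le k\le n$, so Lemma~\ref{l1:1}(2) gives the recursion $\vp(B_{k+1})=d\,\vp(B_k)-\vp(u_d)$. Solving this linear recursion (a routine telescoping/geometric sum) yields
\[
\vp(B_n)\;=\;d^{n-n'}\vp(B_{n'})-\vp(u_d)\cdot\frac{d^{n-n'}-1}{d-1}.
\]

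Next, since $\ln B_n=\sum_{p}\vp(B_n)\ln p\ge\sum_{p\in I(B_{n'})}\vp(B_n)\ln p$ and $\ln\widehat{B_{n'}}=\sum_{p\in I(B_{n'})}\vp(B_{n'})\ln p$, it suffices to prove the pointwise inequality
\[
d^{n-n'}\vp(B_{n'})-\vp(u_d)\cdot\frac{d^{n-n'}-1}{d-1}\;\ge\;\frac{d^{n-n'}}{3}\,\vp(B_{n'})
\]
for each $p\in I(B_{n'})$, which rearranges to
\[
\frac{2d^{n-n'}}{3}\,\vp(B_{n'})\;\ge\;\vp(u_d)\cdot\frac{d^{n-n'}-1}{d-1}.
\]

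By definition of $I(B_{n'})$ we have $\vp(B_{n'})>\vp(u_d)$, hence $\vp(B_{n'})\ge \vp(u_d)$; if $\vp(u_d)=0$ the inequality is trivial, and otherwise we may bound $\vp(B_{n'})$ below by $\vp(u_d)$. It therefore reduces to checking the purely numerical inequality $\tfrac{2}{3}\ge \tfrac{1}{d-1}$ (after cancelling $\vp(u_d)\,d^{n-n'}$ and dropping the $-1$ in the numerator on the right), which holds precisely because $d\ge 3$. Summing over $p\in I(B_{n'})$ gives the claimed bound. The only mildly delicate point is the sharpness of the factor $1/3$: it is exactly what the hypothesis $d\ge 3$ buys us, so this is not really an obstacle but rather the reason for the assumption $d\ge 3$ in the statement.
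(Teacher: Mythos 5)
Your argument is correct and follows essentially the same route as the paper: reduce to the prime-by-prime inequality $\vp(B_n)\ge\frac{d^{n-n'}}{3}\vp(B_{n'})$ for $p\in I(B_{n'})$, iterate Lemma~\ref{l1:1} to get $\vp(B_n)=d^{n-n'}\vp(B_{n'})-\vp(u_d)\sum_{i=0}^{n-n'-1}d^i$, and then invoke $\vp(u_d)<\vp(B_{n'})$ together with $d\ge 3$ to absorb the correction term. The only difference is cosmetic: the paper bounds $\vp(u_d)$ by $\vp(B_{n'})$ first and then estimates the coefficient $d^{n-n'}-\sum d^i\ge\tfrac13 d^{n-n'}$, whereas you keep the closed form and reduce to $\tfrac23\ge\tfrac1{d-1}$ after cancellation; both are the same inequality in different clothes.
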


\begin{proof}
	If $\widehat{B_{n'}}=1$, it is trivial.
	
	Now we assume that $\widehat{B_{n'}}\geq 2$. It is enough to show that every
	prime $p\in I(B_{n'})$ satisfies 
	\begin{equation}\label{eqq1}
	\vp(B_{n})\geq \frac{d^{n-n'}}{3}\vp\left(B_{n'}\right).
	\end{equation}

	Using Lemma~\ref{l1:1} inductively, we have
	\begin{equation*}
	\vp\left(B_{n}\right) =d^{n-n'}\vp(B_{n'})-\vp(u_d)\sum_{i=0}^{n-n'-1}d^i\geq \left(d^{n-n'}-\sum_{i=0}^{n-n'-1}d^i\right)\vp(B_{n'}).
	\end{equation*}
	
	From our assumption that $d\geq 3$, we have \[d^{n-n'}-\sum\limits_{i=0}^{n-n'-1}d^i\geq d^{n-n'}-2d^{n-n'-1}\geq \frac{1}{3}d^{n-n'}, \]
	which completes the proof.
\end{proof}

\subsection{The lower bound for $\ln |g_c^n(c)|$}
\begin{lemma}\label{lemma f>1}
	Given any $x^2$-divisible $g(x)\in \ZZ[x]$ and $c\in \QQ$, if $|g_c^{n'}(c)|\geq\max\left\{4L_g,|c|\right\}$ for some $n'\geq 0$, then for every $n\geq n'$ we have
	\begin{equation*}
	|g_c^{n}(c)|\geq 2^{-\frac{d^{n-n'}-1}{d-1}}\cdot |g_c^{n'}(c)|^{d^{n-n'}} .
	\end{equation*}	
	Clearly, in this case $c$ is in the basin of infinity for $g_c$.
\end{lemma}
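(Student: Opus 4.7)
The plan is to proceed by induction on $n\geq n'$, carrying as the induction hypothesis both the target inequality and the auxiliary invariant $|g_c^n(c)|\geq \max\{4L_g,|c|\}$. The base case $n=n'$ is the lemma's assumption (and gives the trivial bound with exponent $0$). The heart of the induction step is a one-step amplification
\[|g_c^{n+1}(c)|\geq \tfrac{1}{2}|g_c^n(c)|^d;\]
once this is in hand, iterating it and simplifying via the elementary identity
\[d\cdot\frac{d^{n-n'}-1}{d-1}+1 = \frac{d^{n+1-n'}-1}{d-1}\]
immediately yields the claimed bound at stage $n+1$.

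To establish the one-step bound, I set $y\coloneqq g_c^n(c)$ and apply the reverse triangle inequality to $g_c(y)=u_dy^d+\sum_{i=2}^{d-1}u_iy^i+c$, obtaining
\[|g_c(y)|\geq |u_d||y|^d-\sum_{i=2}^{d-1}|u_i||y|^i-|c|.\]
Since $|y|\geq 4L_g\geq 4$, the middle sum is at most $(L_g-1)|u_d||y|^{d-1}$, so the first two terms combine to $|u_d||y|^{d-1}(|y|-L_g+1)\geq \tfrac{3}{4}|u_d||y|^d$, using $|y|-L_g\geq \tfrac{3}{4}|y|$. Finally $|u_d|\geq 1$ together with $|c|\leq|y|\leq \tfrac{1}{4}|y|^d$ (valid because $|y|^{d-1}\geq 4$) absorbs the additive term $-|c|$ into the remaining $\tfrac{1}{4}|y|^d$ slack, producing $|g_c(y)|\geq \tfrac{1}{2}|y|^d$.

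It remains to check that the auxiliary invariant propagates. From $|g_c^{n+1}(c)|\geq \tfrac{1}{2}|g_c^n(c)|^d$ together with $|g_c^n(c)|^{d-1}\geq 2$ one gets $|g_c^{n+1}(c)|\geq |g_c^n(c)|\geq \max\{4L_g,|c|\}$, closing the induction. The basin-of-infinity claim is then immediate from iterating the one-step amplification, since $|g_c^{n'}(c)|\geq 4L_g>2^{1/(d-1)}$ forces $|g_c^n(c)|\to\infty$. The main obstacle is purely computational bookkeeping: coupling the threshold $4L_g$ with the eventual factor of $\tfrac{1}{2}$ so that the leading term dominates the subleading polynomial terms and the additive constant $c$ simultaneously, and then tracking the geometric growth of the exponent of $2$ to match $(d^{n-n'}-1)/(d-1)$.
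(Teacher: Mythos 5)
Your proof is correct and takes essentially the same route as the paper: induction on $n$, with the one-step amplification $|g_c^{n+1}(c)|\geq\tfrac{1}{2}|g_c^n(c)|^d$ obtained from the reverse triangle inequality and the threshold $|g_c^n(c)|\geq 4L_g$. The only cosmetic difference is that you carry the invariant $|g_c^n(c)|\geq\max\{4L_g,|c|\}$ explicitly as part of the induction hypothesis, whereas the paper re-derives it at each step directly from the accumulated bound $|g_c^n(c)|\geq 2^{-(d^{n-n'}-1)/(d-1)}|g_c^{n'}(c)|^{d^{n-n'}}\geq\max\{4L_g,|c|\}$.
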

\begin{proof}
	
	The proof follows from induction.
	For $n=n'$ this lemma is trivial. 
	
	Assume that  this lemma holds for some $n\geq n'$. Then we have 
	\[|g_c^{n}(c)|\geq 2^{-\frac{d^{n-n'}-1}{d-1}}\cdot|g_c^{n'}(c)|^{d^{n-n'}}\geq \max\{4L_g, |c|\}\cdot \left|4L_g/2\right|^{\frac{d^{n-n'}-1}{d-1}}\geq \max\{4L_g, |c|\},\]
	and hence
	\begin{multline}\label{eq:11}
	|g_c^{n+1}(c)|\geq |u_d||g_c^{n}(c)|^d-\sum_{i=2}^d|u_i||g_c^{n}(c)|^{i}-|c|
	\\\geq |u_d|\Big(|g_c^{n}(c)|^d-L_g|g_c^{n}(c)|^{d-1}\Big)
	= |g_c^{n}(c)|^{d-1}|u_d|\Big(|g_c^{n}(c)|-L_g\Big)
	\\\geq   |g_c^{n}(c)|^{d}/2\geq 2^{-\frac{d^{n-n'+1}-1}{d-1}}\cdot |g_c^{n'}(c)|^{d^{n-n'+1}} .\qedhere
	\end{multline}
	%	By mathematical induction, we have
	%	$$g_c^{n}(\frac{\alpha}{\beta})\geq \max\{|c|,4L_g\}. $$ It is followed by
	%	$$\frac{3}{4}(g_c^{n-1}(\frac{\alpha}{\beta}))^{d}\leq g_c^{n}(\frac{\alpha}{\beta})\leq \frac{5}{4}(g_c^{n-1}(\frac{\alpha}{\beta}))^{d}.$$
	%	It implies
	%	$$|g_c^{n}(\frac{\alpha}{\beta})|\geq |c|^{d^n} (\frac{3}{4})^{\sum\limits_{i=0}^{n-1} d^i}>|c|^{d^n}(\frac{3}{4})^{\frac{d^n}{d-1}}$$
	%	and
	%	$$|g_c^{n}(\frac{\alpha}{\beta})|\leq |c|^{d^n} (\frac{5}{4})^{\sum\limits_{i=0}^{n-1} d^i}<|c|^{d^n}(\frac{3}{4})^{\frac{d^n}{d-1}}$$
	%	%	From the condition that $M>>0$, we know that $$g_c^{n}(a/b)\geq (\frac{1}{2})^{d^{n-1}+\dots+1}M^{d^n}.$$
\end{proof}

\begin{corollary}\label{lemma f>}
Given any $x^2$-divisible $g(x)\in \ZZ[x]$ and $c\in \QQ$,	if $|g_c^{n'}(c)|\geq\max\left\{4L_g,|c|\right\}$ for some $n'\geq 0$, then for every $n\geq n'$ we have
	\begin{equation*}
	\ln |g_c^{n}(c)|\geq d^{n-n'}\ln |g_c^{n'}(c)/2|.
	\end{equation*}	
\end{corollary}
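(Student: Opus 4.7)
The plan is to derive this corollary directly from Lemma~\ref{lemma f>1} by taking logarithms on both sides and bounding a geometric-series coefficient. Since the hypothesis $|g_c^{n'}(c)|\geq \max\{4L_g,|c|\}$ is exactly the hypothesis of Lemma~\ref{lemma f>1}, I may apply it to get
\[
|g_c^{n}(c)|\;\geq\; 2^{-\frac{d^{n-n'}-1}{d-1}}\cdot |g_c^{n'}(c)|^{d^{n-n'}}
\]
for every $n\geq n'$. Taking natural logarithms (noting that $|g_c^{n'}(c)|\geq 4L_g\geq 4>0$, so everything is positive) gives
\[
\ln|g_c^{n}(c)|\;\geq\; d^{n-n'}\ln|g_c^{n'}(c)| \;-\; \frac{d^{n-n'}-1}{d-1}\ln 2.
\]

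The remaining step is to compare this lower bound with the target $d^{n-n'}\ln|g_c^{n'}(c)/2| = d^{n-n'}\ln|g_c^{n'}(c)|-d^{n-n'}\ln 2$. Thus it suffices to verify the purely elementary inequality
\[
\frac{d^{n-n'}-1}{d-1}\;\leq\; d^{n-n'},
\]
which holds for every $d\geq 2$ and every $n\geq n'$ since $d^{n-n'}-1\leq (d-1)d^{n-n'}$ is equivalent to $1 \leq d^{n-n'}(d-2)+d^{n-n'}$, clearly true.

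There is essentially no obstacle here: the corollary is a cosmetic repackaging of Lemma~\ref{lemma f>1} in a form better suited for estimating $\ln|A_n|$ via \eqref{eqq:1}. The only mild point is to check that $|g_c^{n'}(c)/2|>1$ so that $\ln|g_c^{n'}(c)/2|>0$ and the inequality is genuinely non-trivial; this follows from $|g_c^{n'}(c)|\geq 4L_g\geq 4$, hence $|g_c^{n'}(c)/2|\geq 2>1$. After this verification, the proof is complete.
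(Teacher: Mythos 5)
Your proof is correct and follows the same route as the paper, which simply says the corollary ``follows directly from Lemma~\ref{lemma f>1}''; you have merely spelled out the logarithm and the elementary bound $\frac{d^{n-n'}-1}{d-1}\leq d^{n-n'}$, which indeed holds for all $d\geq 2$. (A small nit: your rearrangement ``equivalent to $1\leq d^{n-n'}(d-2)+d^{n-n'}$'' is not the literal algebraic equivalent of the inequality after clearing the denominator --- the clean form is $d^{n-n'}(2-d)\leq 1$ --- but the conclusion you draw is correct.)
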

\begin{proof}
	It follows directly from Lemma~\ref{lemma f>1}.
\end{proof}

Given an algebraic number $\gamma\in \CC$ of degree $\ell$ with conjugates $\gamma_1\coloneqq \gamma,\gamma_2,\dots,\gamma_\ell$ over $\QQ$, let $a_0$ be an integer such that the coefficients of the polynomial $g(X)=a_0\prod_{i=1}^{\ell}(X-\gamma_i)$ are integers of $\gcd\ 1$, then 
we define the Mahler measure of $\gamma$ by
\[M(\gamma)\coloneqq |a_0|\prod_{i=1}^{\ell}\max(1,|\gamma_i|).\] 

%\begin{lemma}
%	For $c\in \QQ$ and $\zeta\in\CC$ such that $|c|<2N_f$ and $|z|<2N_f$, any root $\gamma$ of $g_c-\zeta$ satisfies
%	\begin{equation}\label{coe}
%	|\gamma|\leq 2N_f.
%	\end{equation}
%\end{lemma}
%\begin{proof}
%	\Ren{need to modify it slightly}Cauchy's bound on roots of polynomials and the size of $c$ imply that the roots of $g_c$ lie in the disc $|z|\leq\Delta+1$. 
%\end{proof}
%\begin{corollary}
%	For any $n\geq 1$, if $|c|\leq 2N_f$, then we have all the roots of $g_c^n(x)=0$ have norm less than $2N_f$.
%\end{corollary}
%\begin{proof}
%	Follows from mathematical induction.
%\end{proof}

\begin{notation}
	For every $r\geq 1$ and $\delta>0$ we put 	\[ W(r,\delta)\coloneqq 2 \times 10^7 \delta^{-4} \cdot\ln 4r\cdot\ln \ln 4r.\]
\end{notation}

\begin{theorem}[\cite{Evertse1997}, Theorem 1]\label{ever}
	Let $0 < \delta < 1.$
	Then for every algebraic number $\gamma$ of degree $r\geq 1$, there are at most 	$W(r,\delta)$
	solutions $a/b\in \QQ$ to 
	\begin{equation}
	\label{ineq:diophantine}
	|a/b-\gamma| < M(a/b)^{-2-\delta}	\end{equation} 
	with
	$M(a/b)\geq \max\{4^{2/\delta},M(\gamma)\}.$
\end{theorem}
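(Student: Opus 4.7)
The plan is to split the solutions $a/b$ to (\ref{ineq:diophantine}) into height ranges and treat each range by a different technique. Set $H_0 \coloneqq \max\{4^{2/\delta}, M(\gamma)\}$, so by hypothesis every solution satisfies $M(a/b) \geq H_0$; note that for coprime $a,b$ we have $M(a/b) = \max(|a|,|b|)$, so in particular $|b| \leq M(a/b)$.

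First I would establish a \emph{gap principle}. If $a_1/b_1 \neq a_2/b_2$ are two solutions with $M(a_1/b_1) \leq M(a_2/b_2)$, then $|a_1/b_1 - a_2/b_2| \geq 1/|b_1 b_2|$, while the triangle inequality and (\ref{ineq:diophantine}) give
\[
\frac{1}{|b_1 b_2|} \leq |a_1/b_1 - \gamma| + |a_2/b_2 - \gamma| \leq 2 M(a_1/b_1)^{-2-\delta}.
\]
Combining these with $|b_1| \leq M(a_1/b_1)$ forces $M(a_2/b_2) \geq \tfrac{1}{2} M(a_1/b_1)^{1+\delta}$. Hence the heights of successive solutions grow like an iterated power, and the number of solutions with $M(a/b)$ in any interval $[H, H^K]$ is bounded by $O\!\left(\log\log K/\log(1+\delta)\right)$.

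Second, for ``very large'' solutions $M(a/b) \geq H_1$ with $H_1 = H_1(r,\delta)$ chosen explicitly, I would invoke a quantitative version of Roth's theorem --- concretely, the quantitative subspace theorem of Schmidt--Schlickewei--Evertse applied to the pair of linear forms $L_1(X,Y) = X - \gamma Y$ and $L_2(X,Y) = Y$ at the archimedean place. This yields an absolute bound on the number of such solutions of order $\log(4r)\,\log\log(4r)$, with the $\delta$-dependence coming from the geometry-of-numbers analysis of exceptional subspaces. The gap principle then controls the ``medium'' solutions with $H_0 \leq M(a/b) < H_1$ by another $O(\log\log H_1)$ term; balancing $H_1$ against $\delta$ and $r$ fuses the two counts into a single expression of the form $c(\delta)\,\log(4r)\,\log\log(4r)$.

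The main obstacle is obtaining the explicit constant $2\times 10^7$ with the sharp exponent $\delta^{-4}$. The elementary gap-principle argument above only yields a $\log\log$ contribution with mild $\delta$-dependence; the bulk of the difficulty lies in the quantitative subspace theorem step, where one must carefully optimize (a) the bound on the number of exceptional subspaces, (b) the approximation exponent $2+\delta$, and (c) the height threshold $H_1$. The power $\delta^{-4}$ is characteristic of Evertse's sharpening and requires tracking the geometry-of-numbers constants through the successive minima argument rather than using off-the-shelf forms of the quantitative Roth theorem.
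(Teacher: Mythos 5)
This statement is not proved in the paper at all: it is quoted verbatim from Evertse \cite{Evertse1997} (Theorem 1) and used as a black-box ingredient in Corollary~\ref{ever1}, so there is no internal proof to compare your proposal against. With that caveat, your outline does capture the actual architecture of Evertse's argument: a gap principle controls solutions of moderate height, and a quantitative Roth/Subspace-theorem input controls the large ones. As a proof attempt, however, it has a genuine gap, and you acknowledge it yourself: all of the mathematical content of the statement is the explicit bound $W(r,\delta) = 2\times 10^7\,\delta^{-4}\ln 4r\cdot\ln\ln 4r$, and your sketch stops exactly where that work begins. Naming the quantitative subspace theorem and saying ``balancing $H_1$ against $\delta$ and $r$ fuses the two counts'' does not establish the constant $2\times 10^7$, the exponent $\delta^{-4}$, or the factor $\ln 4r\cdot\ln\ln 4r$; those come from a careful geometry-of-numbers analysis of successive minima that is the whole point of \cite{Evertse1997}.

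There is also a smaller quantitative slip in the gap-principle step. From $M(a_2/b_2)\geq \frac{1}{2}M(a_1/b_1)^{1+\delta}$ together with $M(a_1/b_1)\geq 4^{2/\delta}$ one gets $M_{i+1}\geq M_i^{1+\delta/2}$, i.e.\ only singly exponential growth of $\log M_i$. Hence the number of solutions with $M(a/b)\in[H,H^K]$ is of order $(\log K)/\log(1+\delta/2)$, which is roughly $(\log K)/\delta$, not $\log\log K/\log(1+\delta)$ as you claim; the extra logarithm would require a doubly exponential gap, which this elementary argument does not give. This does not destroy the strategy, but it changes the arithmetic in the balancing step and would have to be corrected before the two counts could be combined as you describe.
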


Theorem~\ref{ever} implies the following result.

\begin{corollary}\label{ever1}
Given any $x^2$-divisible $g(x)\in \ZZ[x]$, for every $L>0$, $N\in \NN$, $c\in [-L,L]\cap \QQ$ and $\gamma\in \CC$ such that $g^{N}_c(\gamma)=0$, there is an integer $D>0$, independent of $c$, such that 
	\begin{equation}
	|a/b-\gamma|<\frac{1}{(2bD)^3}
	\end{equation}
	has at most
	$W(d^{N},1/10)$
	rational solutions $a/b$ with $b \geq \max\left\{4^{20}, (|u_d|B_{ 0}D)^{d^N}\right\}.$
\end{corollary}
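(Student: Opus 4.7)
The plan is to derive the corollary directly from Theorem~\ref{ever} with $\delta=1/10$, applied to the algebraic number $\gamma$, whose degree over $\QQ$ is at most $\deg g_c^N=d^N$. The whole task is to translate the hypotheses $|a/b-\gamma|<(2bD)^{-3}$ and $b\geq\max\{4^{20},(|u_d|B_0D)^{d^N}\}$ into the two hypotheses of Evertse's theorem, for a suitable integer $D=D(g,L,N)$ independent of $c$.

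First, I would produce a uniform upper bound $M_N=M_N(g,L,N)$ on the modulus of every complex root of $g_c^N(x)$ as $c$ ranges over $[-L,L]\cap\QQ$. Since $|g_c(y)|\geq\tfrac12|u_d||y|^d$ once $|y|$ is large enough (depending only on $g$ and $L$), every root of $g_c$ is bounded uniformly in $c$, and an induction on the level of the preimage tree $\gamma\mapsto g_c(\gamma)\mapsto\cdots\mapsto g_c^N(\gamma)=0$ bounds every root of $g_c^N$. I would then set $D\coloneqq\max\{1,\lceil M_N\rceil\}$.

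Second, I would bound $M(\gamma)$. A routine induction using $c=A_0/B_0$ shows that $h(x)\coloneqq B_0^{d^{N-1}}g_c^N(x)\in\ZZ[x]$, with degree $d^N$ and leading coefficient (in absolute value) $B_0^{d^{N-1}}|u_d|^{(d^N-1)/(d-1)}$. Combining $d^{N-1}\leq d^N$, $(d^N-1)/(d-1)\leq d^N$, and $\prod_i\max(1,|\gamma_i|)\leq D^{d^N}$, this gives
\[M(\gamma)\leq M(h)\leq (|u_d|B_0D)^{d^N}.\]
Third, I would verify both hypotheses of Theorem~\ref{ever} for an arbitrary $a/b$ in lowest terms satisfying the hypotheses of the corollary. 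Since $M(a/b)=\max(|a|,b)\geq b\geq\max\{4^{20},(|u_d|B_0D)^{d^N}\}$, the size requirement $M(a/b)\geq\max\{4^{2/\delta},M(\gamma)\}$ is immediate. For the approximation requirement, I would split cases: if $M(a/b)=b$, then $(2bD)^3\geq b^{2.1}$ follows from $b\geq 4^{20}$; and if $M(a/b)=|a|>b$, then the triangle inequality with $|a/b-\gamma|<1$ and $|\gamma|\leq D$ forces $|a|\leq 2bD$, whence $M(a/b)^{2.1}\leq(2bD)^3$. In either case $|a/b-\gamma|<(2bD)^{-3}\leq M(a/b)^{-2-\delta}$, so Theorem~\ref{ever} produces at most $W(\deg\gamma,1/10)\leq W(d^N,1/10)$ solutions, using the monotonicity of $W(r,\delta)$ in $r\geq 1$.

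The only step with any substance is the first; making the root bound $M_N$ genuinely uniform in $c\in[-L,L]$ requires some care with the basin of infinity of $g_c$, but nothing delicate. The remaining two steps are mechanical comparisons between the exponent $3$ in the corollary's hypothesis and Evertse's exponent $2+\delta=2.1$, together with the standard denominator-clearing computation $B_0^{d^{N-1}}g_c^N(x)\in\ZZ[x]$.
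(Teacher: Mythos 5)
Your argument is correct and follows the same route as the paper's: obtain a uniform root bound $D$ over $c\in[-L,L]$, clear denominators to bound $M(\gamma)$ by $(|u_d|B_0D)^{d^N}$, and then check that $b\geq\max\{4^{20},(|u_d|B_0D)^{d^N}\}$ together with the exponent $3>2+\tfrac{1}{10}$ verifies both hypotheses of Theorem~\ref{ever}, finishing with the monotonicity of $W(\cdot,1/10)$. The only cosmetic differences are that you bound $M(\gamma)$ via $M(\gamma)\le M\bigl(B_0^{d^{N-1}}g_c^N\bigr)$ (implicitly invoking Gauss's lemma and multiplicativity of Mahler measure) rather than bounding the leading coefficient of the minimal polynomial directly, and you split on $M(a/b)=b$ versus $M(a/b)=|a|$ rather than on $|a/b|\le 2D$ versus $|a/b|>2D$; both reorganizations are harmless.
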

\begin{proof}
	Let $\gamma_{c,1},\dots,\gamma_{c,d^N}$ be the roots of $g_c^N(x)=0$ in $\CC$ which are not necessary to be distinct.
	Since $g_c^{N}({x})$ is continuous as a function of $x$ and $c$, there exists an integer $D>1$ such that for every $c\in [-L,L]\cap \QQ$ and $1\leq i\leq d^N$ we have \begin{equation}\label{Eq:9}
	|\gamma_{c,i}|<D.
	\end{equation} 	Without loss of generality, we put $\gamma\coloneqq \gamma_{c,1}$ and  $h(x)\coloneqq a_0\prod_{i=1}^\ell(x-\gamma_{c,i})$ to be the minimal polynomial of $\gamma$ with integer coefficients of $\gcd \ 1$. 
	
	Since $B_{ 0}^{d^N}g^{N}_c(\gamma)=0$ and $B_{ 0}^{d^N}g^{N}_c(x)$ is a polynomial with integer coefficients, we have $h(x)|B_{ 0}^{d^N}g^{N}_c(x)$. Combined with Gauss's lemma, this implies 
	\begin{equation}\label{Eq3}
	a_0|(u_dB_{ 0})^{d^N}.
	\end{equation} 
	
	Combining (\ref{Eq:9}) with (\ref{Eq3}), we have 
	\begin{equation}\label{EQ1}
	M(\gamma)<(|u_d|B_{ 0})^{d^N}D^\ell.
	\end{equation}
	
	On the other hand, for every rational number $a/b$ in the lowest terms such that $|a/b|\leq 2D$ we have
\begin{equation}\label{new1}
	b\leq M(a/b)\leq 2bD.
\end{equation}
	
		Note that Theorem~\ref{ever} still holds when we do the following modifications.
	\begin{enumerate}
		   \renewcommand{\theenumi}{(\arabic{enumi})}
		\item Restricting this theorem to a set of algebraic numbers and changing $M(\gamma)$ to a function of $\gamma$ which is larger than $M(\gamma)$ for every $\gamma$ in this set.
		\item Changing the right hand side of (\ref{ineq:diophantine}) to a function of $a/b$ which is less than $M(a/b)^{-2-\delta}$ for every rational number $a/b$.
		\item  Changing the second $M(a/b)$ in Theorem~\ref{ever} to a function of $a/b$ which is less than $M(a/b)$ for every rational number $a/b$.
	\end{enumerate}
	
	Therefore, combined with (\ref{EQ1}) and (\ref{new1}), 
	 Theorem~\ref{ever} implies that there are at most 	$W(\ell,\delta)$
	rational  solutions $a/b$ to
	\begin{equation}\label{Eq4}
	|a/b-\gamma| < \left(2bD\right)^{-2-\delta}	\end{equation} 
	such that $|a/b|\leq 2D$ and
	$b\geq \max\{4^{2/\delta},(|u_d|B_{ 0})^{d^N}D^\ell\}.$
	
	For  rational number $a/b$ such that $|a/b|\geq 2D$ we have
	\[|a/b-\gamma|\geq D>1>(2bD)^{-2-\delta}.\]
	
	Together with (\ref{Eq4}), this shows that
	there at most 	$W(\ell,\delta)$
	rational  solutions $a/b\in \QQ$ to
	\begin{equation}\label{eqqq}
|a/b-\gamma| < \left(2bD\right)^{-2-\delta}	\end{equation} 
	with
	$b\geq \max\{4^{2/\delta},(|u_d|B_{ 0})^{d^N}D^\ell\}.$
	
	Take $\delta\coloneqq 1/10$. 
	Combining $W(\ell,1/10)\leq  W(d^N,1/10)$ with   the modification(3) above, we can replace $\ell$ by $d^N$ and $-2-1/10$ by $-3$, which completes the proof.	
\end{proof}

Now we consider $|c|\leq 4L_g$.  Recall that $\SS_{g}$ is the set that contains all rational number $c$ such that $\{g_c^n(0)\}$ is infinite. By Corollary~\ref{lemma f>} with $n'=0$, for every $c\in (-\infty,-4L_g]\cup[4L_g,\infty)$ we have $\lim_{n\to \infty} \ln |g_c^{n}(c)|=\infty$ and hence
$\SS_{g}\supset (-\infty,-4L_g]\cup[4L_g,\infty)$.

We denote by $\UU_g^0$ the finite subset of $\SS_{g}\cap[-4L_g,4L_g]$ consisting of all the rational numbers with denominator dividing $u_d$ and put $\UU_g\coloneqq [-4L_g,4L_g]\cap(\SS_g\backslash \UU_g^0)$. 
 The following proposition aims at dealing the case $c\in \UU_g$. It is worth noting that $\widehat{B_{0}}\geq 2$
 for all $c\in \UU_g$.

\begin{proposition}\label{p:1}
	For an $x^2$-divisible  $g(x)\in \ZZ[x]$ and a real number $\alpha\in[-4L_g,4L_g]$ such that $g(x)\neq u_dx^d$  or $\alpha\neq 0$, there exists $0<\delta<L_g$, $C>0$ and an integer $N\geq 0$ such that for every $c\in (\alpha-\delta,\alpha+\delta)\cap \SS_g$ if $\widehat{B_{n'}}\geq 2$ for some $n'\geq 0$, then
	there is a finite set $S_c\subset \NN$ of bounded cardinality $N+n'$ such that for every $n\not\in S_c$ we have
	\[\ln|g_{c}^{n}(c)|\geq \min\left\{({-1+1/d})\ln B_n+\ln C,\ \ln \delta\right\}.\]	
\end{proposition}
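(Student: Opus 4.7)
The plan is to dichotomize on the size of $|g_c^n(c)|$: when $|g_c^n(c)|\geq\delta$, the second branch of the $\min$ gives the conclusion immediately, so we focus on $|g_c^n(c)|<\delta$. In this case I would write $g_c^n(c)=g_c^M(y)$ with $y\coloneqq g_c^{n-M}(c)=A_{n-M}/B_{n-M}$ for a carefully chosen iteration depth $M$, and translate smallness of $|g_c^n(c)|$ into a rational approximation of $y$ by some root of $g_c^M(x)=0$, which is then controlled by Corollary~\ref{ever1}.

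The first step is to fix $M\geq 1$ depending only on $g$ and $\alpha$ so that the maximum multiplicity $m_{\max}$ of any root of $g_\alpha^M(x)=0$ satisfies $3m_{\max}\leq(d-1)d^{M-1}$. Here the hypothesis $g(x)\neq u_dx^d$ or $\alpha\neq 0$ is essential, since it rules out the only case in which $g_\alpha$ is (after centering) a pure $d$-th power whose unique finite critical point is totally ramified. In the main case ($g$ not a pure power), a Riemann--Hurwitz count for $g\colon\PP^1\to\PP^1$ gives $\sum_{P\in\AAA^1}(e_P-1)=d-1$, so every finite critical point of $g$ has ramification index at most $d-1$, and the multiplicity $\prod_{i=0}^{M-1}e_{g_c^i(\gamma)}$ of any root $\gamma$ of $g_c^M(x)=0$ is bounded by $(d-1)^M$; the inequality then holds once $(d/(d-1))^{M-1}\geq 3$. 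In the remaining sub-case ($g=u_dx^d$ with $\alpha\neq 0$), a separate analysis of the periodic-orbit structure of $0$ under $g_c$ (using that two consecutive iterates of $\gamma$ cannot both vanish since $g_c(0)=c\neq 0$) gives a comparable bound $m_{\max}\lesssim d^{M/2}$. Then shrink $\delta>0$ so that for $c\in(\alpha-\delta,\alpha+\delta)$ the roots $\gamma_{c,j}$ of $g_c^M(x)=0$ stay in small disjoint neighborhoods of those of $g_\alpha^M$, uniformly bounded by the constant $D$ of Corollary~\ref{ever1}, and so that $|g_c^M(y)|<\delta$ forces $y\in\RR$ into a neighborhood of a unique real root $\gamma$.

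In the second step I would use the factorization $g_c^M(y)=L^{(M)}\prod_j(y-\gamma_{c,j})$ with $L^{(M)}=u_d^{(d^M-1)/(d-1)}$. The contribution of complex-conjugate roots (bounded below by $|\operatorname{Im}(\gamma_{c,j})|>0$) and of distant real roots is a uniform positive constant, so $|g_c^n(c)|<\delta$ yields $|y-\gamma|^m\leq C_1|g_c^n(c)|$ for the closest real root $\gamma$ of multiplicity $m$, with $C_1>0$ uniform. Applying Corollary~\ref{ever1} with $N=M$ to each of the at most $d^M$ candidate roots $\gamma_{c,j}$ produces an exceptional set of at most $d^M\cdot W(d^M,1/10)$ indices $n-M$ where $|y-\gamma_{c,j}|<(2B_{n-M}D)^{-3}$; outside it one concludes $|g_c^n(c)|\geq C_1^{-1}(2D)^{-3m}\,B_{n-M}^{-3m}$.

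The final step converts this into a bound in terms of $B_n$. Iterating Lemma~\ref{l1:1}, the hypothesis $\widehat{B_{n'}}\geq 2$ propagates to $\widehat{B_{n-M}}\geq 2$ for $n-M\geq n'$, yielding a lower bound of the shape $B_n\geq C_2^{-1}B_{n-M}^{d^M}$ (with $C_2=|u_d|^{(d^{M+1}-1)/(d-1)}$), hence $B_{n-M}^{-3m}\geq C_2^{-3m/d^M}\,B_n^{-3m/d^M}$. The choice of $M$ in Step 1 gives $3m/d^M\leq(d-1)/d$, so $|g_c^n(c)|\geq C\cdot B_n^{-(d-1)/d}$, matching the first branch of the $\min$. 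Collecting exceptions, $S_c$ consists of the $\leq d^M\cdot W(d^M,1/10)$ Evertse-exceptional indices, the $M$ initial indices where the iteration is not yet defined, and an $n'+O_M(1)$ set of small $n$ where $B_{n-M}$ fails the Corollary~\ref{ever1} threshold (controlled via Lemma~\ref{first}); absorbing all $g,\alpha$-dependent terms into a single constant yields $\#S_c\leq N+n'$. The main obstacle is Step~1: securing the uniform multiplicity bound $m_{\max}\leq(d-1)^M$ (or its monomial-subcase analogue) for all $c$ near $\alpha$, which is precisely where the hypothesis $g(x)\neq u_dx^d$ or $\alpha\neq 0$ is used in an essential way to avoid the degenerate case of maximal ramification.
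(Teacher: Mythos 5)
Your proposal follows essentially the same route as the paper's proof of Proposition~\ref{p:1}: fix an iteration depth $M$ so that the multiplicity $m_{i_0}$ of a nearby root cluster of $g_\alpha^M$ is small relative to $d^M$ (you usefully justify $m_{\max}\leq(d-1)^M$ via Riemann--Hurwitz, and the monomial case via the no-consecutive-vanishing observation, whereas the paper asserts the bound $m_{i_0}\leq(d-1)^{N_0}$ without proof), choose $\delta$ by continuity of the roots of $g_c^M$, factor $g_c^n(c)=g_c^M(y)$ with $y=g_c^{n-M}(c)$ over the roots of $g_c^M$, bound distant roots below by $\epsilon$ and nearby ones via Corollary~\ref{ever1}, and finally convert the resulting lower bound from $B_{n-M}$ to $B_n$ using Lemma~\ref{l1:1}/Lemma~\ref{first}. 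One caveat: the parenthetical claim that close complex-conjugate roots contribute a uniform positive constant because $|y-\gamma_{c,j}|\geq|\operatorname{Im}(\gamma_{c,j})|>0$ is not uniform in $c$ (a multiple real root of $g_\alpha^M$ may split into conjugate pairs with imaginary part tending to zero as $c\to\alpha$), but this is harmless in your argument since you in any case apply Corollary~\ref{ever1} to every root in the cluster, which is exactly how the paper obtains the needed uniform lower bound.
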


\begin{proof}
	%	
	%Now we inductively label the points in set $g_c^{-k}(0)$ for $1\leq k\leq N_0.$ For every $1\leq k\leq N_0-1$, $j\in [d]$ and  every sequence $\ui\coloneqq (i_1,i_2,\dots, i_k)\in [d]^{k}$ we put $(\ui,j)\coloneqq (i_1,\dots,i_k,j)$.
	%
	
	Let \begin{equation*}
	N_0\coloneqq \left\lceil\frac{2\ln 3}{\ln (d/(d-1))}\right\rceil+1,
	\end{equation*}
	which satisfies
	\begin{equation}\label{eq:n0}
	9(d-1)^{N_0-1}\leq d^{N_0-1}.
	\end{equation}
	
	Let $\gamma_1,\dots,\gamma_r\in \CC$ be the distinct roots of $g^{N_0}_\alpha(x)=0$ of multiplicity $m_1,\dots,m_r$, respectively. 
	Choose an $0<\epsilon<1$ small enough such that for any two distinct $i,j\in [r]$ we have 
	$|\gamma_i-\gamma_j|> 3\epsilon$.
	
	By continuity of $g^N_c(x)$ as a function of $x$ and $c$, there exists $0<{\delta}<L_g$ such that for every $1\leq i\leq r$ and $\alpha', \beta\in \RR$ with $|\alpha'-\alpha|<\delta$ and $|\beta|<\delta$ there are exactly $m_i$ roots of $g_{\alpha'}^{N_0}(x)-\beta=0$ in the disk
	$O(\gamma_{i},\epsilon)\subset \CC$.
	
	%the preimage $g_{c}^{-k}(O(0,{\delta}))$ is a disjoint union of open sets labeled as $\{O_{\ui}\}_{\ui\in S_k}$ which satisfies that
	%\begin{itemize}
	%	\item for every $\ui\in \bigsqcup\limits_{k=0}^{N_0} S_k$ we have
	%	$O_{\ui}\subset O(\gamma_{\ui},\epsilon)$.
	%	\item For every $\ui\in \bigsqcup\limits_{k=0}^{N_0-1} S_k$ we have
	%	$g^{-1}(O_{\ui})=\bigsqcup\limits_{j=1}^{r_{\ui}}O_{(\ui,j)}$; and each 
	%	$0\leq j\leq r_{\ui}$ the restricted map $g_{c}: O_{(\ui,j)}\to O_{\ui}$ is onto.
	%\end{itemize} 
	Now we consider an arbitrary $c\in (\alpha-\delta,\alpha+\delta)\cap \SS_g$.
	
	Let $\Gamma$ be the multiset consisting of all the roots of $g_{c}^{N_0}(x)=0$, i.e. two elements in $\Gamma$ could be the same.
	From the argument above, for every $n\geq N_0$ if $|g_{c}^{n}(c)|< \delta$, then there exists $1\leq i_0\leq r$ such that $g_c^{n-N_0}(c)\in O(\gamma_{i_0},\epsilon)$.
	We put \begin{equation*}
	\Gamma_1\coloneqq \Gamma\backslash O(\gamma_{i_0},\epsilon)\quad\textrm{and}\quad
	\Gamma_2\coloneqq \Gamma\cap O(\gamma_{i_0},\epsilon).
	\end{equation*} 
	Note that we have \begin{equation}\label{eq:key0}
	\Gamma=\Gamma_1\cup \Gamma_2,\ \#(\Gamma)=d^{N_0} \textrm{~and~} \#(\Gamma_2)=m_{i_0}.
	\end{equation}

	Now we count the distance between $g_c^{n-N_0}(c)$ and the points in $\Gamma$.
	
	For every $\xi\in \Gamma_1 $, from our choice of $\epsilon$, we have
	\begin{equation}\label{eq:key1}
	\left|\xi-g_c^{n-N_0}(c)\right|> 3\epsilon-2\epsilon=\epsilon.
	\end{equation}

	For every $\xi\in \Gamma_2$, by Corollary~\ref{ever1} with $L\coloneqq 4L_g$ and $N\coloneqq N_0$, there is an integer $D>0$, independent of $c$, such that 
	\begin{equation}\label{Eq6}
	|a/b-\xi|<\frac{1}{(2bD)^3}
	\end{equation}
	has at most
	$W(d^{N_0},1/10)$
	rational solutions $a/b$ with $b \geq \max\{4^{20}, (|u_d|B_{ 0}D)^{d^{N_0}}\}.$ 
	
	Put \[N_1\coloneqq \left\lceil\log_d 120\right\rceil+N_0 \quad\textrm{and}\quad N_2\coloneqq \left\lceil 3\log_d log_2 (u_d^2D+1) \right\rceil+2N_0.\]  Then for every $n\geq N_1$, by Lemma~\ref{first}, we have
	\[\ln B_{n+n'-N_0}\geq \frac{d^{n-N_0}}{3}\ln \widehat{B_{n'}}\geq \frac{d^{n-N_0}}{3}\ln 2\geq 20\ln 4.\]
	
	If $\widehat{B_{0}}=1$, by Lemma~\ref{first}, (\ref{Eq2}) and the choice of $N_2$, for every $n\geq N_2$ we have
	\[\ln B_{n+n'-N_0}\geq \frac{d^{n-N_0}}{3}\ln \widehat{B_{n'}}\geq  \frac{d^{n-N_0}}{3}\ln 2\geq d^{N_0}\ln (u_d^2D) \geq {d^{N_0}}\ln (|u_d|B_{ 0}D).\]

	If $\widehat{B_{0}}\geq 2$, by Lemma~\ref{first} and (\ref{Eq2}) again, for every $n\geq N_2$ we have \begin{multline*}
	\ln B_{n-N_0}\geq \frac{d^{n-N_0}}{3}\ln \widehat{B_{0}}\geq  \left(\frac{d^{n-N_0}}{3}-d^{N_0}\right)\ln 2+d^{N_0}\ln \widehat{B_{0}}\\
	\geq d^{N_0}\ln (u_d^2\widehat{B_{ 0}}D)\geq d^{N_0}\ln (|u_d|B_{ 0}D).
	\end{multline*}

	Therefore, there are most $W(d^{N_0},1/10)$ many integers $n\geq \max\{N_1+n',N_2+n'\}$ such that $g_c^{n-N_0}(c)$ is a rational solution to (\ref{Eq6}).
	Combined with 
	\begin{multline*}
	|g_c^n(c)|=|g_c^{N_0}\left(g_c^{n-N_0}(c)\right)|
	=\prod_{\xi\in \Gamma}|g_c^{n-N_0}(c)-\xi|
	=\prod_{\xi\in \Gamma_1}|g_c^{n-N_0}(c)-\xi|\cdot\prod_{\xi\in \Gamma_2}|g_c^{n-N_0}(c)-\xi|,
	\end{multline*}
	this implies that for all but at most $\#\Gamma_2\cdot W(d^{N_0},1/10)$ many $n\geq \max\{N_1+n',N_2+n'\}$ we have
	\begin{equation}\label{eq:k1}
	\ln |g_c^n(c)|\geq  \#\Gamma_1\cdot\ln \epsilon-3\cdot\# \Gamma_2\ln \left(2DB_{n-N_0}\right).
	\end{equation}
	Combining (\ref{eq:key0}) and our assumption $0<\epsilon<1$, we have
\[ \#\Gamma_1\cdot\ln \epsilon-3\cdot\# \Gamma_2\ln \left(2DB_{n-N_0}\right) 	\geq d^{N_0}\ln \left(\frac{\epsilon}{8D^3}\right)-3\#\Gamma_2\ln B_{n-N_0}.\]
	From our assumption that $g(x)\neq u_dx^d$ or $\alpha\neq 0$, we have \[\# \Gamma_2=m_{i_0}\leq (d-1)^{N_0}.\]
	Therefore, the previous statement implies that 
	 for all but at most \\
	 $(d-1)^{N_0} W(d^{N_0},1/10)$ many $n\geq \max\{N_1+n',N_2+n'\}$ we have
	 \begin{equation}\label{new2}
	 \ln |g_c^n(c)|
	 \geq d^{N_0}\ln \left(\frac{\epsilon}{8D^3}\right)-3(d-1)^{N_0}\ln B_{n-N_0}.
	 \end{equation}
	 By (\ref{Eq2}) and (\ref{eq:n0}), we have
	 \[3(d-1)^{N_0}\ln B_{n-N_0}\leq \frac{d^{N_0-1}(d-1)}{3}\ln B_{n-N_0}\leq \frac{d^{N_0-1}(d-1)}{3}\ln (|u_d|\widehat{B_{n-N_0}}).\]
	 By  Lemma~\ref{first}, we obtain
\begin{multline*}
	 \frac{d^{N_0-1}(d-1)}{3}\ln (|u_d|\widehat{B_{n-N_0}})\leq \frac{d^{N_0-1}(d-1)}{3}\ln |u_d|+\frac{d-1}{d}\ln B_{n}\\
	 \leq  d^{N_0}\ln |u_d|+\frac{d-1}{d}\ln B_{n}
\end{multline*}
	 The two inequality above implies that 
	  \[\textrm{Right Hand Side of~} (\ref{new2})\geq  d^{N_0}\ln \left(\frac{\epsilon}{8|u_d|D^3}\right)+(-1+1/d)\ln B_{n}, \]
	 which 
	 completes the proof.
\end{proof}

\section{Proof of Proposition~\ref{thm:fund ineq}}\label{s3}
The basic idea of proving Proposition~\ref{thm:fund ineq} is to show that for each $x^2$-divisible  $g(x)\in \ZZ[x]$ there exists a finite cover of $\SS_g$ as follows:
\begin{enumerate}
	   \renewcommand{\theenumi}{(\arabic{enumi})}
	\item $(-\infty,-4L_g]\cup[4L_g,\infty)$;
	\item $(\alpha-\delta_{g,\alpha},\alpha+\delta_{g,\alpha})\cap \UU_g$ for finitely many $\alpha$ in $[-4L_g,4L_g] $ with $0<\delta_{g,\alpha}<L_g$;
	\item the finite set $\UU_g^0$,
\end{enumerate}
such that $g$ has rapidly increasing numerators on every set in this cover.

Recall that for every $n\in \NN$ we denote by $\omega(n)$ the number of its distinct prime divisors. For convenience, we put $s_d(n)\coloneqq \sum_{p|n}d^{\frac{n}{p}}$. Then we have the following estimation.
\begin{lemma}\label{l:sk}
	For every $d\geq 2$ and every $n\geq 30$, we have $s_d(n)\leq d^{\frac{3n}{5}}$.
\end{lemma}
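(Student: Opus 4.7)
The plan is to bound $s_d(n)$ by combining two elementary estimates: a bound on the number of terms and a uniform bound on their size. First, for every prime $p \mid n$ one has $n/p \leq n/2$, so each summand satisfies $d^{n/p} \leq d^{n/2}$. Second, if $p_1 < p_2 < \cdots < p_k$ are the distinct prime divisors of $n$, then $n \geq p_1 p_2 \cdots p_k \geq 2^k$, so $\omega(n) \leq \log_2 n$. Combining,
\[ s_d(n) \;=\; \sum_{p \mid n} d^{n/p} \;\leq\; \omega(n)\, d^{n/2} \;\leq\; (\log_2 n)\, d^{n/2}. \]

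It therefore suffices to show that $\log_2 n \leq d^{n/10}$ for all $d \geq 2$ and $n \geq 30$, because then
\[ s_d(n) \;\leq\; d^{n/10} \cdot d^{n/2} \;=\; d^{3n/5}. \]
Since the right-hand side is monotone increasing in $d$, it is enough to verify the worst case $d = 2$, i.e. $\log_2 n \leq 2^{n/10}$ for $n \geq 30$. At $n = 30$ the left side is $\log_2 30 < 5$ while the right side is $2^3 = 8$, so the inequality holds at the base point. For $n \geq 30$ a straightforward calculus check (the derivative of $2^{n/10} - \log_2 n$ is positive on $[30,\infty)$, since $\tfrac{\ln 2}{10}\, 2^{n/10} \geq \tfrac{1}{n \ln 2}$ whenever $n \geq 30$) shows that the gap only widens. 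This gives the claim.

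The argument involves no serious obstacle; the only tiny nuisance is ensuring the base case $n = 30$ holds for the smallest value $d = 2$, which is where the constant $30$ in the hypothesis comes from. Everything else is monotonicity.
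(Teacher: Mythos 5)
Your proof is correct and follows essentially the same route as the paper: both use $\omega(n)\leq\log_2 n$ together with $n/p\leq n/2$ to get $s_d(n)\leq(\log_2 n)\,d^{n/2}$, and then reduce to showing $\log_2 n\leq d^{n/10}$ for $n\geq30$. You are in fact a touch more careful at the last step --- the paper asserts $\log_2 n<5$ for every $n\geq30$, which is literally false once $n\geq32$, whereas your base-case-plus-monotonicity check of $\log_2 n\leq 2^{n/10}$ closes that small gap cleanly.
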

\begin{proof}
	
	For every integer $n\geq 2$ we have $n\geq 2^{\omega(n)}$ and hence 
	\begin{equation}\label{Eq10}
	\omega(n)\leq \log_2n.
	\end{equation}
	
	Since for every prime divisor $p$  of $n$ we have $n/p\leq n/ 2$. 
	Combined with (\ref{Eq10}), we have
	\begin{equation}\label{ineq:s_d}
	s_d(n)\leq d^{\frac{n}{2}}\omega(n)\leq   d^{\frac{n}{2}}\log_2n.
	\end{equation}
	
	On the other hand, for every $n\geq 30$ we have
	\[\log_2 n< 5< 2^3\leq  d^{\frac{n}{10}} .\]
	Together with (\ref{ineq:s_d}), this finishes the proof.
\end{proof}

\begin{lemma}\label{L:3}
	Given any $x^2$-divisible $g(x)\in \ZZ[x]$ and $c\in \QQ$, for every $n\geq 30$ we have
	\begin{equation*}
	\sum_{p|n}\ln \left|A_{n/p}\right|\leq d^{\frac{3n}{5}}\ln \Big(2|u_d|^2\widehat{B_{0}}\max\left\{|c|,4L_g\right\} \Big).
	\end{equation*}
\end{lemma}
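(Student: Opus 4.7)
The plan is to bound each $\ln|A_{n/p}|$ individually by combining the two estimates in Lemma~\ref{lem:Bn<}, then sum over $p\mid n$ and apply Lemma~\ref{l:sk} to convert the arithmetic sum $s_d(n)$ into the desired power $d^{3n/5}$. A final cosmetic step replaces the crude factor $|u_d|B_0$ by the sharper-looking $|u_d|^2\widehat{B_0}$ that the statement advertises.

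Concretely, I would first observe that since $A_m/B_m=g_c^m(c)$ with $B_m>0$, we have $|A_m|=B_m\cdot|g_c^m(c)|$, hence
\[
\ln|A_{m}|=\ln B_{m}+\ln|g_c^{m}(c)|.
\]
Applying Lemma~\ref{lem:Bn<}(1) and (2) with $m=n/p$ gives
\[
\ln|A_{n/p}|\leq d^{n/p}\ln B_0+d^{n/p}\ln\bigl(2|u_d|\max\{|c|,4L_g\}\bigr)=d^{n/p}\ln\bigl(2|u_d|B_0\max\{|c|,4L_g\}\bigr).
\]
Summing over distinct prime divisors of $n$ produces
\[
\sum_{p\mid n}\ln|A_{n/p}|\leq s_d(n)\cdot\ln\bigl(2|u_d|B_0\max\{|c|,4L_g\}\bigr),
\]
and for $n\geq 30$ Lemma~\ref{l:sk} bounds $s_d(n)\leq d^{3n/5}$. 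To finish, I invoke (\ref{Eq2}) with $a=B_0$, yielding $B_0\leq|u_d|\widehat{B_0}$ and therefore $2|u_d|B_0\leq 2|u_d|^2\widehat{B_0}$; since $L_g\geq 1$ forces $2|u_d|^2\widehat{B_0}\max\{|c|,4L_g\}\geq 8>1$, the logarithm is positive and monotonicity lets me enlarge the factor without disturbing the inequality.

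There is no real obstacle to anticipate here: the lemma is essentially a bookkeeping step that packages the two upper bounds of Lemma~\ref{lem:Bn<} with the elementary number-theoretic bound of Lemma~\ref{l:sk}. The only point of care is ensuring the logarithm on the right-hand side stays positive before replacing $B_0$ by $|u_d|\widehat{B_0}$, which as noted above is automatic from $L_g\geq 1$.
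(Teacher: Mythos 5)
Your proof is correct and matches the paper's approach: both combine the two bounds of Lemma~\ref{lem:Bn<} to control each $\ln|A_{n/p}|$, sum over primes to obtain $s_d(n)$, apply Lemma~\ref{l:sk}, and then use (\ref{Eq2}) to replace $B_0$ by $|u_d|\widehat{B_0}$. Your added remark that $L_g\geq 1$ keeps the logarithm positive, which justifies enlarging both the factor $s_d(n)$ and the argument of the logarithm, is a detail the paper leaves implicit but is worth noting.
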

\begin{proof}
	By Lemma~\ref{lem:Bn<}, for every $n\geq 0$ we have 
		\begin{equation*}
	\sum_{p|n}\ln \left|A_{n/p}\right|\leq \sum_{p|n} d^{n/p}\ln \Big(2|u_d| B_0 \max\left\{|c|,4L_g\right\}\Big).
	\end{equation*}
	Together with 
	Lemma~\ref{l:sk} and (\ref{Eq2}), this finishes the proof.
\end{proof}

\begin{proposition}\label{P:1}
	Every $x^2$-divisible  $g(x)\in \ZZ[x]$ of degree $d\geq 3$ has rapidly increasing numerators on $(-\infty,-4L_g]\cup[4L_g,\infty)$.
\end{proposition}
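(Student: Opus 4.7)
I plan to compare a lower bound on $\ln|A_n|$ coming from the decomposition (\ref{eqq:1}) together with Corollary~\ref{lemma f>} and Lemma~\ref{first}, against the upper bound on $\sum_{p|n}\ln|A_{n/p}|$ from Lemma~\ref{L:3}. Fix $c\in\QQ$ with $|c|\geq 4L_g$. The hypothesis $|g_c^0(c)|=|c|\geq \max\{4L_g,|c|\}$ of Corollary~\ref{lemma f>} (with $n'=0$) is satisfied, so $\ln|g_c^n(c)|\geq d^n\ln(|c|/2)$; and Lemma~\ref{first} (with $n'=0$) yields $\ln B_n\geq (d^n/3)\ln\widehat{B_0}$ (trivially valid when $\widehat{B_0}=1$). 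Summing,
\[
\ln|A_n|\;\geq\;d^n\left(\ln(|c|/2)+\tfrac{1}{3}\ln\widehat{B_0}\right).
\]
On the other hand, for $n\geq 30$ Lemma~\ref{L:3} gives $\sum_{p|n}\ln|A_{n/p}|\leq d^{3n/5}\ln(2|u_d|^2\widehat{B_0}|c|)$, using $\max\{|c|,4L_g\}=|c|$. Hence (\ref{eq:2}) reduces to
\[
d^{2n/5}\;>\;\frac{\ln(2|u_d|^2\widehat{B_0}|c|)}{\ln(|c|/2)+\tfrac{1}{3}\ln\widehat{B_0}}.
\]

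The heart of the argument is to bound the right-hand side by a constant depending only on $g$. I plan to set $X\coloneqq\ln(|c|/2)$ and $Y\coloneqq\ln\widehat{B_0}$, so that $X\geq \ln(2L_g)\geq \ln 2$ and $Y\geq 0$, and rewrite the ratio as $(\ln(4|u_d|^2)+X+Y)/(X+Y/3)$. Bounding each summand separately---using $\ln(4|u_d|^2)\leq(\ln(4|u_d|^2)/\ln 2)\cdot X$, $X\leq X+Y/3$, and $Y\leq 3(X+Y/3)$ (the last when $Y>0$; trivial when $Y=0$)---gives
\[
\frac{\ln(4|u_d|^2)+X+Y}{X+Y/3}\;\leq\;\frac{\ln(4|u_d|^2)}{\ln 2}+4\;\eqqcolon\;K_g.
\]
The essential feature making this bound uniform in $c$ is that the $\ln\widehat{B_0}$ term in the lower bound, supplied by Lemma~\ref{first}, cancels the $\ln\widehat{B_0}$ term in the upper bound up to a constant factor; without it the ratio would diverge as $\widehat{B_0}\to\infty$.

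Finally, taking $N\coloneqq\max\{30,\lceil\tfrac{5}{2}\log_d K_g\rceil+1\}$, which depends only on $g$, and $J_c\coloneqq\{1,\dots,N-1\}$, we have $d^{2n/5}>K_g$ for all $n\geq N$, so (\ref{eq:2}) holds for every $n\notin J_c$ with $\#J_c<N$. This verifies that $g$ has rapidly increasing numerators on $(-\infty,-4L_g]\cup[4L_g,\infty)$. The main obstacle is the uniform bound on the ratio above; once that is established, the rest of the argument is a direct combination of results already proved in \S\ref{s2}.
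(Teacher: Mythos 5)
Your proof is correct and takes essentially the same approach as the paper: both combine the lower bound on $\ln|A_n|$ from Corollary~\ref{lemma f>} (with $n'=0$) and Lemma~\ref{first} with the upper bound from Lemma~\ref{L:3}, and then observe that the resulting inequality holds for all $n\geq N$ with $N$ depending only on $g$. Your explicit uniform bound $K_g$ on the ratio is a slightly different packaging of the same estimate (the paper instead absorbs $\ln(|c|/2)+\tfrac13\ln\widehat{B_0}\geq\tfrac13\ln|c\widehat{B_0}|$ using $|c|\geq 4$), and your observation that the $\ln\widehat{B_0}$ term from Lemma~\ref{first} is what makes the bound uniform is exactly the point of the paper's argument.
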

\begin{proof}
	Let $c$ be an arbitrary rational number in $(-\infty,-4L_g]\cup[4L_g,\infty)$.

	By Lemma~\ref{first}, Corollary~\ref{lemma f>} with $n'=0$ and $|c|\geq 4L_g\geq 4$, for every $n\geq 0$ we have
	\begin{equation}\label{eq:15}
	\ln |A_n|\geq d^{n}\ln |c/2|+{\frac{d^n}{3}}\ln \widehat{B_{0}}\geq {\frac{d^n}{3}}\ln |c\widehat{B_{0}}|.
	\end{equation}
	Combined with Lemma~\ref{L:3} and $\left|c\right|\geq 4L_g$,
	this implies that for every $n\geq 30$ we have
	\begin{equation*}
	\ln |A_n|-\sum_{p|n}\ln \left|A_{n/p}\right|
	\geq 
	-d^{\frac{3n}{5}}\ln (2|u_d|^2)+\left(d^n/3-d^{\frac{3n}{5}}\right)\ln \left|c\widehat{B_{0}}\right|.
	\end{equation*}

	Therefore, there exists an integer $N>\max\left\{\frac{5}{2}\log_d 3,30\right\}$, which only depends on $g$, such that for every $n\geq N$ and every rational number $c\in (-\infty,-4L_g]\cup[4L_g,\infty)$   we have \[\ln |A_n|>\sum_{p|n}\ln \left|A_{n/p}\right|.\]
	Thus we prove this proposition.
\end{proof}

We next prove the following. 
\begin{proposition}\label{P:2}
	Every polynomial $g(x)=u_dx^d\in \ZZ[x]$ of degree $d\geq 3$ has rapidly increasing numerators on $\left(-\frac{1}{|4u_d|},\frac{1}{|4u_d|}\right)\cap \SS_g$.
\end{proposition}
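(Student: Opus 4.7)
The plan is to argue directly, since Proposition~\ref{p:1} explicitly excludes the case $g(x)=u_dx^d$ with $\alpha=0$. Note that $c\in\SS_g$ forces $c\neq 0$ (otherwise the orbit of $0$ is constantly zero), and the hypothesis $|c|<1/(4|u_d|)$ gives $|u_dc^{d-1}|\leq 1/4^{d-1}\leq 1/16$, which should trap the forward orbit of $0$ near $c$ rather than letting it escape to infinity. Setting $x_n\coloneqq g_c^n(0)$, a short induction shows $|x_n|\leq 2|c|$ for every $n\geq 1$, and then $|x_{n+1}-c|=|u_dx_n^d|\leq|c|/2$ upgrades this to
\[\tfrac{1}{2}|c|\leq|x_n|\leq\tfrac{3}{2}|c|\quad\text{for every }n\geq 1.\]
Since $A_n/B_n=g_c^n(c)=x_{n+1}$, this gives $\ln|A_n|=\ln B_n+\ln|c|+O(1)$ with an absolute $O(1)$ bounded by $\ln 2$.

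Next I would exploit the hypothesis arithmetically. Writing $c=A_0/B_0$ in lowest terms, the bound $|c|\geq 1/B_0$ together with $B_0\leq|u_d|\widehat{B_0}$ forces
\[\widehat{B_0}\geq 1/(|c||u_d|)>4,\quad\text{so}\quad\ln\widehat{B_0}\geq\ln 5\quad\text{and}\quad-\ln|c|\leq\ln\widehat{B_0}+\ln|u_d|.\]
Applying Lemma~\ref{first} gives $\ln|A_n|\geq(d^n/3)\ln\widehat{B_0}+\ln|c|-\ln 2$, while combining Lemmas~\ref{lem:Bn<} and~\ref{l:sk} (and discarding the nonpositive term $\omega(n)\ln(3|c|/2)$, since $|c|<1/4<2/3$) yields
\[\sum_{p\mid n}\ln|A_{n/p}|\leq d^{3n/5}\bigl(\ln\widehat{B_0}+\ln|u_d|\bigr)\quad\text{for every }n\geq 30.\]

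Substituting $-\ln|c|\leq\ln\widehat{B_0}+\ln|u_d|$ into the difference of the two estimates, the Zsigmondy-type inequality $\ln|A_n|>\sum_{p\mid n}\ln|A_{n/p}|$ reduces to
\[\ln\widehat{B_0}\bigl(d^n/3-d^{3n/5}-1\bigr)>(d^{3n/5}+1)\ln|u_d|+\ln 2,\]
which holds for every $n\geq N$ with $N$ depending only on $d$ and $|u_d|$, since $\ln\widehat{B_0}\geq\ln 5$ is a fixed positive lower bound. Taking $J_c\coloneqq\{1,\dots,N-1\}$ then exhibits rapidly increasing numerators with uniform constant $N$. The main obstacle is precisely the uniformity of $N$ in $c$: as $c\to 0$, the archimedean term $\ln|c|$ becomes very negative on both sides, so naively the estimates degrade. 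What saves the argument is the arithmetic bound $\widehat{B_0}\geq 1/(|c||u_d|)$, which converts shrinking archimedean data into growing $p$-adic data so that the $|c|$-dependence cancels and only the universal constant $\ln 5$ is needed to beat the $u_d$-dependent error.
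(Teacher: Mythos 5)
Your proof is correct, and it takes a genuinely different and arguably cleaner route than the paper's. The paper splits $\left(-\frac{1}{4|u_d|},\frac{1}{4|u_d|}\right)\cap\SS_g$ into the positive and negative subintervals (Lemmas~\ref{L:1} and~\ref{L:2}), and the negative case further splits on the parity of $d$; you avoid all of this by establishing a single two-sided estimate $\frac{1}{2}|c|\leq|g_c^n(c)|\leq\frac{3}{2}|c|$ valid on the whole punctured interval. Your estimate is also tighter than the paper's on the positive side, where the paper uses $|c|\leq|g_c^n(c)|\leq(|u_d|+1)^{d^n}|c|$ with an upper bound that grows doubly-exponentially in $n$; moreover, that lower bound $|c|\leq|g_c^n(c)|$ tacitly assumes $u_d>0$ (for $u_d<0$ and $c\in(0,\frac{1}{4|u_d|})$ one has $g_c(c)<c$), whereas your $\frac{1}{2}|c|$ is sign-robust. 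You also extract a sharper arithmetic input, $\widehat{B_0}>4$, from $\widehat{B_0}\geq B_0/|u_d|\geq 1/(|c||u_d|)$, where the paper only records $\widehat{B_0}\geq 2$; either lower bound suffices, but yours makes the cancellation of the $\ln|c|$ terms explicit and shows cleanly why the threshold $N$ depends only on $d$ and $|u_d|$. The common core of the two arguments is the same: Lemma~\ref{first} gives the lower bound on $\ln B_n$, Lemma~\ref{lem:Bn<}(1) together with Lemma~\ref{l:sk} controls $\sum_{p\mid n}\ln|A_{n/p}|$, and a uniform positive lower bound on $\ln\widehat{B_0}$ supplies the leverage to beat the error terms uniformly in $c$.
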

Since $0\notin \SS_g$, it is sufficient to show the following two lemmas.

\begin{lemma}\label{L:1}
	Every polynomial $g(x)=u_dx^d\in \ZZ[x]$ of degree $d\geq 3$ has rapidly increasing numerators on $\left(0, \frac{1}{4|u_d|}\right)\cap \SS_g$.
\end{lemma}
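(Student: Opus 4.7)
The plan is to exploit the smallness of $c$: for $c\in(0,1/(4|u_d|))$ the orbit $g_c^n(c)$ remains comparable to $c$, so the growth of $|A_n|$ comes essentially from the denominator $B_n$, which already grows doubly exponentially by Lemma~\ref{first}. First I would record basic facts. Writing $c = A_0/B_0$ in lowest terms, the bound $c<1/(4|u_d|)$ forces $B_0 > 4|u_d|A_0 \geq 4|u_d|$. Since $v_p(B_0)\leq v_p(u_d)$ for every $p\notin I(B_0)$, the quotient $B_0/\widehat{B_0}$ divides $|u_d|$, hence $\widehat{B_0}\geq B_0/|u_d| > 4$ and in particular $\widehat{B_0}\geq 2$. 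Note also that $L_g = 1$ for $g(x)=u_dx^d$.

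The key technical step is to prove by induction on $n$ that
\[\tfrac{|c|}{2}\leq |g_c^n(c)|\leq 2|c|.\]
Indeed, if $|g_c^n(c)|\leq 2|c|$, then $|u_d(g_c^n(c))^d|\leq 2^d|u_d||c|^d = |c|\cdot 2^d|u_d||c|^{d-1}$, and the hypotheses $|c|<1/(4|u_d|)$, $d\geq 3$ give $2^d|u_d||c|^{d-1} \leq 2^d/4^{d-1} = 4/2^d \leq 1/2$. Therefore $|g_c^{n+1}(c)| = |c + u_d(g_c^n(c))^d|$ lies in $[|c|/2,\,3|c|/2]$, which closes the induction.

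Once this orbit control is in hand, Lemma~\ref{first} (applicable since $\widehat{B_0}\geq 2$) gives $\ln B_n \geq (d^n/3)\ln\widehat{B_0}$, and using $|c|\geq 1/B_0\geq 1/(|u_d|\widehat{B_0})$ I obtain
\[\ln|A_n| = \ln B_n + \ln|g_c^n(c)| \geq \frac{d^n}{3}\ln\widehat{B_0} - \ln(2|u_d|\widehat{B_0}).\]
On the other hand, Lemma~\ref{L:3} (with $L_g=1$ and $|c|<4L_g$, so $\max\{|c|,4L_g\}=4$) gives
\[\sum_{p|n}\ln|A_{n/p}| \leq d^{3n/5}\ln(8|u_d|^2\widehat{B_0}) \qquad (n\geq 30).\]
Subtracting yields
\[\ln|A_n| - \sum_{p|n}\ln|A_{n/p}| \geq \left(\frac{d^n}{3} - d^{3n/5} - 1\right)\ln\widehat{B_0} - d^{3n/5}\ln(8|u_d|^2) - \ln(2|u_d|).\]
Since $\widehat{B_0}\geq 2$ for every admissible $c$, the right-hand side is positive for all $n\geq N$ with $N$ depending only on $|u_d|$, i.e.\ only on $g$. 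Taking $J_c = \{1,\dots,N-1\}$ supplies a uniform bound and completes the proof.

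The only delicate point is the uniform lower bound $|g_c^n(c)|\geq |c|/2$: this is exactly what the restriction $|c|<1/(4|u_d|)$ is designed to guarantee, and the bound would fail as $c$ approaches the boundary of the attracting basin of the fixed point near $c$. After that the argument is mechanical, but one must carefully track the $\widehat{B_0}$-dependence on both sides so that the dominant $\widehat{B_0}$-contributions cancel in the favorable direction.
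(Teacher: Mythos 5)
Your proof is correct and follows the same overall blueprint as the paper's: show $\widehat{B_0}\geq 2$, control $|g_c^n(c)|$ uniformly in $n$, lower-bound $\ln B_n$ via Lemma~\ref{first}, upper-bound $\sum_{p\mid n}\ln|A_{n/p}|$ (you invoke Lemma~\ref{L:3}; the paper re-derives the same thing from Lemma~\ref{lem:Bn<} and Lemma~\ref{l:sk}), and conclude positivity for $n\geq N$. The one place where you deviate is actually an improvement: the paper asserts $|c|\leq|g_c^n(c)|\leq(|u_d|+1)^{d^n}|c|$, and the lower inequality fails when $u_d<0$ (already for $n=1$, $g_c(c)=c(1-|u_d|c^{d-1})<c$). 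Your induction giving $|c|/2\leq|g_c^n(c)|\leq 2|c|$ is both sharper than the paper's upper bound and valid for either sign of $u_d$; since only an $O(1)$ lower bound $\ln|g_c^n(c)|\geq\ln|c|-O(1)$ is needed downstream, this repairs the slip without altering the structure. Your derivation of $\widehat{B_0}>4$ from $B_0>4|u_d|$ and $B_0/\widehat{B_0}\mid|u_d|$ is also fine and agrees with the paper's appeal to the $\UU_g$ versus $\UU_g^0$ dichotomy. No gaps.
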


\begin{proof}
	For every $c\in (0, \frac{1}{4|u_d|})\cap\SS_g$ and $n \geq 0$ we have
	\begin{equation}\label{Eq8}|c| \leq |g_c^n(c)| \leq (|u_d|+1)^{\frac{d^{n}-1}{d-1}} |c|	\leq (|u_d|+1)^{d^n}|c|\end{equation}
	and 
	$\widehat{B_{0}}\geq 2.$
	
	Combining Lemmas~\ref{lem:Bn<}(1), \ref{first} for $n'=0$ with	(\ref{Eq8}), for every $n\geq 0$ we have
	\begin{equation*}
	\ln |A_n|-\sum_{p|n}\ln \left|A_{n/p}\right|
	\geq (1-\omega(n))\ln |c|+\frac{d^n}{3}\ln \widehat{B_{0}}-s_d(n)\ln\big((|u_d|+1)B_0\big).
	\end{equation*}
	
	Together with (\ref{Eq2}) and lemma~\ref{l:sk}, this implies that for every $n\geq 30$ we have
	\[\ln |A_n|-\sum_{p|n}\ln \left|A_{n/p}\right|	\geq -d^{\frac{3n}{5}} \ln\big((|u_d|+1)|u_d| \big)+\left({\frac{d^n}{3}}-d^{\frac{3n}{5}}\right)\ln \widehat{B_{0}}. \]

	From $\widehat{B_{0}}\geq 2$, there exists an integer $N\geq 30$ such that 
	for every $n\geq N$ we have 
	\[\ln |A_n|-\sum_{p|n}\ln \left|A_{n/p}\right|>0,\]
	which completes the proof.
\end{proof}
\begin{lemma}\label{L:2}
	Every polynomial $g(x)=u_dx^d\in \ZZ[x]$ of degree $d\geq 3$ has rapidly increasing numerators on $\left(-\frac{1}{4|u_d|},0\right)\cap \SS_g$.
\end{lemma}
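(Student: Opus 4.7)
My plan is to follow the structure of the proof of Lemma~\ref{L:1} almost verbatim, with the single nontrivial modification being the replacement of the iteration estimate (\ref{Eq8}). As in that lemma, the observation $\widehat{B_0}\geq 2$ follows immediately from $|c|<1/(4|u_d|)$: writing $c=A_0/B_0$ in lowest terms, the denominator satisfies $B_0>4|u_d|$, and since $B_0\leq|u_d|\widehat{B_0}$ by (\ref{Eq2}), this forces $\widehat{B_0}>4$.

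The key new input is two-sided control of $|g_c^n(c)|$ in the negative range. I will prove by induction on $n\geq 0$ that
\[\tfrac{1}{2}|c|\leq |g_c^n(c)|\leq \tfrac{3}{2}|c|.\]
The base case $n=0$ is trivial. For the inductive step, assuming in particular the weaker bound $|g_c^{n-1}(c)|\leq 2|c|$, the hypothesis $|c|<1/(4|u_d|)$ together with $d\geq 3$ and $|u_d|\geq 1$ yields
\[|u_d(g_c^{n-1}(c))^d|\leq |u_d|(2|c|)^d=|c|\cdot\frac{2^d}{4^{d-1}|u_d|^{d-2}}\leq \tfrac{1}{2}|c|,\]
and applying the triangle inequality to $g_c^n(c)=u_d(g_c^{n-1}(c))^d+c$ produces both the upper bound $|g_c^n(c)|\leq\tfrac{3}{2}|c|$ and the lower bound $|g_c^n(c)|\geq\tfrac{1}{2}|c|$. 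Unlike the positive-$c$ setting of Lemma~\ref{L:1}, where $|g_c^n(c)|$ grows monotonically and a one-sided bound suffices, in the negative case the iterates may oscillate around a fixed point near $c$, so the lower bound $|c|/2$ must be maintained inductively via the strict estimate above. This is the main technical obstacle.

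With these iteration bounds in hand, the rest is routine bookkeeping paralleling Lemma~\ref{L:1}. Using (\ref{eqq:1}), Lemma~\ref{first} with $n'=0$, Lemma~\ref{lem:Bn<}(1), and (\ref{Eq2}), one obtains
\[\ln|A_n|-\sum_{p|n}\ln|A_{n/p}|\geq \left(\tfrac{d^n}{3}-s_d(n)\right)\ln\widehat{B_0}-s_d(n)\ln|u_d|+(1-\omega(n))\ln|c|-\ln 2-\omega(n)\ln\tfrac{3}{2}.\]
Because $|c|<1$ and $\omega(n)\geq 1$, the term $(1-\omega(n))\ln|c|$ is nonnegative and can be dropped. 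Applying Lemma~\ref{l:sk}, the elementary bound $\omega(n)\leq\log_2 n$, and $\widehat{B_0}\geq 2$, one sees that the right-hand side becomes positive for all $n\geq N$, where $N$ is an integer depending only on $g$. Taking $J_c=\{1,\dots,N-1\}$ for every $c$ then verifies the definition of rapidly increasing numerators.
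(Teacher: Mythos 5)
Your proof is correct, and it takes a genuinely different route from the paper. The paper's proof splits into cases on the parity of $d$: for odd $d$ it observes that the orbit of $0$ under $g_{-c}$ is the orbit under $g_c$ up to sign, reducing to Lemma~\ref{L:1}; for even $d$ it establishes $|c|(1-|u_d||c|^{d-1}) \leq |g_c^n(c)| \leq |c|$ by induction using a monotonicity argument for $g_c$ on $(-1/(4|u_d|),0)$. You instead prove the two-sided estimate $\tfrac12|c|\leq|g_c^n(c)|\leq\tfrac32|c|$ directly by induction, with the key point that $|c|<1/(4|u_d|)$, $d\geq3$, $|u_d|\geq 1$ force $|u_d(g_c^{n-1}(c))^d|\leq\tfrac12|c|$ so the triangle inequality applied to $g_c^n(c)=u_d(g_c^{n-1}(c))^d+c$ closes the loop. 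This is uniform in the parity of $d$ and in the sign of $u_d$, so it eliminates the case split entirely and sidesteps the monotonicity considerations (which in the paper's even-$d$ argument tacitly assume $u_d>0$; for $u_d<0$ and $d$ even, $g_c$ is increasing, not decreasing, on the interval in question, so your version is actually more robust). The constants $\tfrac12$ and $\tfrac32$ are weaker than the paper's, but the extra additive contributions $\ln 2$ and $\omega(n)\ln\tfrac32$ are lower-order and absorbed harmlessly in the final estimate, so the bookkeeping goes through exactly as you describe.
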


\begin{proof}
	Note that when	$d$	is odd, we may replace	$c$	with	$-c$	and the forward orbit of $0$ will be unchanged, modulo sign.
	By Lemma~\ref{L:1}, we immediately prove this case.

	Therefore, it is sufficient to study the case that $d$ is even.
	We first show that for every $c\in (-\frac{1}{4|u_d|},0)$ and every $n\geq 0$  we have 
	\begin{equation}\label{Eq7}
	|c| (1-|u_d| |c|^{d-1}) \leq |g_c^n(c)| \leq |c|.
	\end{equation}
	
	For  $n=0$, we have $|g_c^0(c)|=|c|$.
	Assume that (\ref{Eq7}) holds for some $n\geq 0$. Since $g_c(x)$ is negative and decreasing on $(-\frac{1}{4|u_d|},0)$, we have
	\[		|c| (1-|u_d| |c|^{d-1})= |g_c(c)|\leq |g_c(g_c^{n}(c))|\leq |f(0)|=|c|,\]
	which proves (\ref{Eq7}) by induction.

	Combining Lemmas~\ref{lem:Bn<}(1), \ref{first} with 
	(\ref{Eq7}), we have
	\begin{multline*}
	\ln |A_n|-\sum_{p|n}\ln \left|A_{n/p}\right|
	\geq 
	\ln\Big(	|c| \cdot(1-|u_d| |c|^{d-1}) \Big)+ {\frac{d^n}{3}}  \ln \widehat{B_{0}} - s_d(n)\ln B_0-\omega(n)\ln |c|
	\\
	\geq\ln(1-|u_d| |c|^{d-1}) + {\frac{d^n}{3}}  \ln \widehat{B_{0}} - s_d(n)\ln B_0.
	\end{multline*}
	Together with (\ref{Eq2}), Lemma~\ref{l:sk} and $|c|< \frac{1}{4|u_d|}$, this implies that for every $n\geq 30$ we have
	\begin{equation}\label{Eq11}
	\ln |A_n|-\sum_{p|n}\ln \left|A_{n/p}\right|\geq \ln(3/4 )+d^{\frac{3n}{5}}\ln |u_d|+\left({\frac{d^n}{3}}-d^{\frac{3n}{5}}\right)\ln \widehat{B_{0}}.
	\end{equation}
	
	On the other hand, for every $c\in (-\frac{1}{4|u_d|},0)$, we have $\widehat{B_{0}}\geq 2$.
	Combined with (\ref{Eq11}),  this proves that there exists an integer $N\geq 30$ such that 
	for every $n\geq N$ we have 
	\[\ln |A_n|-\sum_{p|n}\ln \left|A_{n/p}\right|>0,\]
	which completes the proof.
\end{proof}

\begin{proposition}\label{P:3}
	Given any $x^2$-divisible   $g(x)\in \ZZ[x]$ of degree $d\geq 3$ and $\alpha\in [-4L_g,4L_g]$ such that $g(x)\neq u_dx^d$ or $\alpha\neq 0$, there is an $0<\delta<L_g$ such that $g$ has rapidly increasing numerators on $c\in (\alpha-\delta,\alpha+\delta)\cap \UU_g$.
\end{proposition}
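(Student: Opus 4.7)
The plan is to feed Proposition~\ref{p:1} into the identity (\ref{eqq:1}) and pit the resulting lower bound on $\ln|A_n|$ against the upper bound on $\sum_{p|n}\ln|A_{n/p}|$ from Lemma~\ref{L:3}; the crucial leverage is that $c\in\UU_g$ forces $\widehat{B_{0}}\geq 2$, so Lemma~\ref{first} supplies a lower bound on $\ln B_n$ of order $d^n\ln\widehat{B_{0}}/3$, which grows much faster than the $d^{3n/5}$ appearing on the right-hand side.

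Concretely, apply Proposition~\ref{p:1} to the pair $(g,\alpha)$, whose hypothesis holds by assumption, to extract constants $\delta\in(0,L_g)$, $C>0$ and $N\in\NN$. For any $c\in(\alpha-\delta,\alpha+\delta)\cap\UU_g$ one has $\widehat{B_{0}}\geq 2$ by definition of $\UU_g$, so we may invoke the proposition with $n'=0$: there exists $S_c$ with $\#S_c\leq N$ such that for every $n\notin S_c$,
\[
\ln|A_n|=\ln B_n+\ln|g_c^n(c)|\geq \min\bigl\{(1/d)\ln B_n+\ln C,\ \ln B_n+\ln\delta\bigr\}.
\]
Since $|c|\leq 4L_g$, Lemma~\ref{L:3} gives, for every $n\geq 30$,
\[
\sum_{p|n}\ln|A_{n/p}|\leq d^{3n/5}\ln(8|u_d|^2L_g)+d^{3n/5}\ln\widehat{B_{0}}.
\]

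To conclude, it suffices to verify that \emph{both} candidate lower bounds for $\ln|A_n|$ strictly exceed this upper estimate once $n\geq N_1$ for some uniform $N_1=N_1(g)$. The Case~2 term $\ln B_n+\ln\delta\geq(d^n/3)\ln\widehat{B_{0}}+\ln\delta$ clearly dominates for large $n$. The more delicate Case~1 bound $(1/d)\ln B_n+\ln C\geq (d^{n-1}/3)\ln\widehat{B_{0}}+\ln C$ yields
\[
\ln|A_n|-\sum_{p|n}\ln|A_{n/p}|\geq \bigl(d^{n-1}/3-d^{3n/5}\bigr)\ln\widehat{B_{0}}-d^{3n/5}\ln(8|u_d|^2L_g)+\ln C;
\]
since $d\geq 3$ the exponent gap $n-1-3n/5=2n/5-1$ is positive and grows, so the leading $d^{n-1}$ term outpaces $d^{3n/5}$ uniformly, while $\ln\widehat{B_{0}}\geq\ln 2>0$. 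Hence the right-hand side is positive for every $n\geq N_1$ and every $c$ in the $\delta$-neighborhood. Setting $J_c\coloneqq S_c\cup\{1,\dots,N_1-1\}$ produces the required set of cardinality at most $N+N_1-1$.

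The main obstacle is the loss of the factor $1/d$ in the Case~1 lower bound on $\ln|A_n|$: without the hypothesis $d\geq 3$ the leading $d^{n-1}$ would fail to outgrow the $d^{3n/5}$ correction uniformly in $\widehat{B_{0}}$, so one could not rule out arbitrarily bad cancellations as $\widehat{B_{0}}\to\infty$. Everything else is bookkeeping with the estimates assembled in Section~\ref{s2}.
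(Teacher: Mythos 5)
Your argument is correct and follows essentially the same route as the paper: invoke Proposition~\ref{p:1} with $n'=0$ (using $\widehat{B_0}\geq 2$ from $c\in\UU_g$), combine the resulting two-case lower bound on $\ln|A_n|$ with Lemma~\ref{first} and the upper bound from Lemma~\ref{L:3}, then absorb the finitely many exceptional indices into $J_c$. One small caveat in your closing remark: the hypothesis $d\geq3$ is not what makes $d^{n-1}$ outgrow $d^{3n/5}$ (the exponent gap $2n/5-1$ is positive for $n\geq3$ regardless of $d$); rather, $d\geq3$ enters because Lemma~\ref{first}, which supplies the $\ln B_n\geq (d^{n}/3)\ln\widehat{B_0}$ estimate you rely on, is stated only for degree $\geq3$.
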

\begin{proof}
	Note that for every $c\in \UU_g$ we have $\widehat{B_{0}}\geq 2$. By Proposition~\ref{p:1} with $n'=0$, there is a $3$-tuple 
	$0<\delta<L_g$, $C>0$ and $N_1>0$ such that for every $c\in (\alpha-\delta,\alpha+\delta)\cap \UU_g$ there is a finite set $S_c\subset \NN$ of bounded cardinality $N_1$ such that for every $n\not\in S_c$ we have
	\[\ln|g_{c}^{n}(c)|\geq  \min\left\{({-1+1/d})\ln B_n+\ln C,\ \ln \delta\right\},\]	 and therefore
	\begin{equation}\label{eq:17}
	\ln |A_n|\geq\min\left\{\frac{1}{d}\ln B_n+\ln C,\ \ln B_n+\ln \delta\right\}.
	\end{equation}

	On the other hand, by Lemma~\ref{first}, we have
	\begin{align*}
	\ln(B_n)+\ln\delta-d^{\frac{3n}{5}}\ln \left|2cu_d^2\widehat{B_{0}}\right|\geq& \ln \delta+\left(\frac{d^n}{3}-d^{\frac{3n}{5}}\right)\ln \widehat{B_{0}}-d^{\frac{3n}{5}}\ln |2cu_d^2|,\\
	\frac{1}{d}\ln( B_n)+\ln  C-d^{\frac{3n}{5}}\ln \left|2cu_d^2\widehat{B_{0}}\right|\geq & \ln C+\left(\frac{d^{n-1}}{3}-d^{\frac{3n}{5}}\right)\ln \widehat{B_{0}}-d^{\frac{3n}{5}}
	\ln |2cu_d^2|.
	\end{align*}

	Combined with Lemma~\ref{L:3}, (\ref{eq:17}) and $|c|\leq 5L_g$, this implies that there exists an integer $N_2\geq 30$ such that for every rational number $c\in (\alpha-\delta,\alpha+\delta)$  and $n\in \{N_2,N_2+1,\dots,\}\backslash S_c$ we have \[\ln |A_n|>\sum_{p|n}\ln \left|A_{n/p}\right|.\]
	Taking $N\coloneqq N_1+N_2$, we prove this proposition.
\end{proof}

Now we turn our attention to the finite set $\UU_g^0$. 
\begin{proposition}\label{P:4}
	Every $x^2$-divisible  $g(x)\in \ZZ[x]$ of degree $d\geq 3$ has rapidly increasing numerators on the finite set $\UU_g^0$.
\end{proposition}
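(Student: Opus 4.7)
The plan is to exploit the finiteness of $\UU_g^0$ and treat each $c \in \UU_g^0$ individually. If for each such $c$ I produce an exceptional set $J_c \subset \NN$ of size at most some $N_c$ on whose complement (\ref{eq:2}) holds, then $N \coloneqq \max_{c \in \UU_g^0} N_c$ witnesses rapidly increasing numerators on all of $\UU_g^0$. The reason $\UU_g^0$ must be separated from $\UU_g$ is that for $c \in \UU_g^0$ the denominator $B_0$ divides $u_d$, so $\widehat{B_0}=1$, and the earlier proofs (which use $\widehat{B_0}\ge 2$ at the starting index) do not directly apply.

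Fix $c \in \UU_g^0$ and split into two cases. In the \emph{escaping} case, some $n_0 \geq 0$ satisfies $|g_c^{n_0}(c)| \geq 4L_g$; since $|c|\le 4L_g$ this coincides with $|g_c^{n_0}(c)|\geq\max\{4L_g,|c|\}$, so Corollary~\ref{lemma f>} yields $\ln|g_c^n(c)| \geq d^{n-n_0}\ln|g_c^{n_0}(c)/2|$ for every $n\geq n_0$. Because $L_g \geq 1$, the factor $\ln|g_c^{n_0}(c)/2|$ is a positive constant depending on $c$, so $\ln|A_n|\geq\ln|g_c^n(c)|$ grows on the $d^n$-scale. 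Combined with Lemma~\ref{L:3}'s $d^{3n/5}$-scale upper bound on $\sum_{p|n}\ln|A_{n/p}|$ (which is finite for us since $\widehat{B_0}=1$), inequality (\ref{eq:2}) holds for all $n$ beyond a threshold depending on $c$.

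In the \emph{bounded} case, $|g_c^n(c)|<4L_g$ for every $n$. I claim there exists $n' \geq 1$ with $\widehat{B_{n'}}\geq 2$. The iteration formula (\ref{AkBk}) shows $B_{n+1}$ divides $B_n^d B_0$, so by induction every prime of $B_n$ divides $B_0$; and because $B_0\mid u_d$, these primes form a fixed finite set. If $\widehat{B_n}=1$ for every $n\geq 0$, then $\vp(B_n)\le\vp(u_d)$ for every prime $p$, forcing $B_n\mid u_d$ and in particular $B_n$ bounded. Together with $|g_c^n(c)|<4L_g$, this would confine the pairs $(A_n,B_n)$ to finitely many values, making the orbit $\{g_c^n(c)\}$ a finite set and contradicting $c\in\SS_g$. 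With $n'$ in hand, Proposition~\ref{p:1} applied at $\alpha=c$ (any $0<\delta<L_g$ works, and the hypothesis $g(x)\ne u_d x^d$ or $\alpha\ne 0$ is automatic because $0\notin\SS_g$) yields $\ln|g_c^n(c)|\geq\min\{(-1+1/d)\ln B_n+\ln C,\ \ln\delta\}$ off an exceptional set of cardinality at most $N+n'$. The argument of Proposition~\ref{P:3}, using Lemma~\ref{first} with this $n'$ and Lemma~\ref{L:3}, then delivers (\ref{eq:2}) for all sufficiently large $n$ outside a finite set.

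The main obstacle is the bounded-case step producing $n'$ with $\widehat{B_{n'}}\geq 2$; once that is established, the rest is a direct adaptation of Proposition~\ref{P:3} from $n'=0$ to general $n'$. Finiteness of $\UU_g^0$ then converts the per-$c$ bounds $N_c$ into the required uniform $N$, completing the proof.
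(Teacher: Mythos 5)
Your proof is correct and takes essentially the same route as the paper: treat each $c\in\UU_g^0$ separately using finiteness, establish a dichotomy (escape to $|g_c^{n'}(c)|>4L_g$ versus the appearance of $\widehat{B_{n'}}\geq 2$) by the bounded-orbit/finite-values contradiction, then handle the escaping case via Corollary~\ref{lemma f>} with Lemma~\ref{L:3} and the other case via Proposition~\ref{p:1} at $\alpha=c$ with general $n'$, exactly as in the paper's two cases. Your explicit observation that the hypothesis of Proposition~\ref{p:1} is automatic because $0\notin\SS_g$ forces $c\neq 0$ is a point the paper leaves implicit.
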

\begin{proof}
	It is sufficient to show that for 
	each individual rational number in $\UU_g^0$ there are finite many $n\in\NN$  satisfying
	(\ref{eq:2}).
	
	Let $c$ be an arbitrary rational number in $\UU_g^0$. We first show that there must exist an integer $n'$ such that either $|g_c^{n'}(c)|>4L_g$ or $\widehat{B_{n'}}\geq 2$. Suppose that for every $n\geq 0$ we have $\widehat{B_{n}}= 1$, i.e., $B_n|u_{d}$. Since $c\in \SS_{g}$ and there are only finitely many integers in $[-4L_g,4L_g]$ with denominator dividing $u_d$, we know that there must exist an  $n'\geq 0$ such that $|g_c^{n'}(c)|>4L_g$. 
	
	(1) 
	When $|c|\leq 4L_g$, $\widehat{B_{0}}=1$ and there exists an integer $n'\geq 1$ such that 
	$|g_c^{n'}(c)|>4L_g$. Combining these conditions with Corollary~\ref{lemma f>} and Lemma~\ref{L:3}, 
	for every $n\geq \max\{30,n'\}$ we have
	\begin{equation*}
	\ln |A_n|-\sum_{p|n}\ln \left|A_{n/p}\right|
	\geq 
	-d^{\frac{3n}{5}}\ln (8L_g|u_d|^2)+d^{n-n'}\left|g_c^{n'}(c)/2\right|.
	\end{equation*}
	Clearly, there exists an integer $N>\max\{30,n'\}$ such that for every $n\geq N$ we have \[\ln |A_n|>\sum_{p|n}\ln \left|A_{n/p}\right|.\]
	
	(2)
	When $|c|\leq 4L_g$, $\widehat{B_{0}}=1$ and there is an integer $n'\geq 1$ such that 
	$\widehat{B_{n'}}\geq 2$. 
	Similar to Proposition~\ref{P:3}, 
	we combine Lemma~\ref{first} with Proposition~\ref{p:1}, and obtain a finite set $S_c\subset \NN$, an integer $N$, $\delta_c>0$ and $C_c>0$ such that for every $n\geq n'$ if $n\not\in S_c$, then
	\[\ln|g_{c}^{n}(c)|\geq \min\left\{({-1+1/d})\ln B_n+\ln C_c,\ \ln \delta_c\right\},\]	 and therefore
	\begin{equation}\label{Eq:12}
	\ln |A_n|\geq\min\left\{\frac{1}{d}\ln B_n+\ln C_c,\ \ln B_n+\ln \delta_c\right\}.
	\end{equation}

	On the other hand, by Lemma~\ref{first}, we have
	\begin{align*}
	\ln B_n+\ln \delta_c-d^{\frac{3n}{5}}\ln \left|2cu_d^2\widehat{B_{0}}\right|\geq& \ln\delta_c+\frac{d^{n-n'}}{3}\ln \widehat{B_{n'}}-d^{\frac{3n}{5}}\ln |2cu_d^2|,\\
	\frac{1}{d}\ln B_n+\ln C_c-d^{\frac{3n}{5}}\ln \left|2cu_d^2\widehat{B_{0}}\right|\geq&  \ln C_c+\frac{d^{n-n'-1}}{3}\ln \widehat{B_{n'}}-d^{\frac{3n}{5}}
	\ln |2cu_d^2|.
	\end{align*}

	Combined with Lemma~\ref{L:3}, (\ref{Eq:12}) and $|c|\leq 4L_g$, this implies that there exists an integer $N_1>\max\{n',30\}$ such that for every $n\in \{N_1,N_1+1,\dots\}\backslash S_c$ we have \[\ln |A_n|>\sum_{p|n}\ln \left|A_{n/p}\right|.\]
	Put $N\coloneqq \#S_c+N_1$. Then we complete the proof.
\end{proof}

\begin{proof}[Proof of Proposition~\ref{thm:fund ineq}]\label{s32}
	By Proposition~\ref{P:3}, for every $g(x)\neq u_dx^d$ or $\alpha\neq 0$ and every real number $\alpha\in [-4L_g,4L_g]$, there is an $0<\delta_{g,\alpha}<L_g$ such that $g$ has rapidly increasing numerators on $c\in (\alpha-\delta_{g,\alpha},\alpha+\delta_{g,\alpha})\cap \UU_g$.
	
	For $g(x)= u_dx^d$ and $\alpha=0$, if we put $\delta_{g,\alpha}\coloneqq \frac{1}{4|u_d|}$, then we proved in Proposition~\ref{P:2} that $g$ has rapidly increasing numerators on $c\in (-\delta_{g,\alpha},\delta_{g,\alpha})\cap \UU_g$.
	
	Now for every $x^2$-divisible  $g(x)\in \ZZ[x]$ we obtain a cover of $\SS_{g}$ as
	\begin{equation}\label{Eq12}
	(-\infty,-4L_g]\cup[4L_g,\infty)\cup \UU_g^0\cup \bigcup\limits_{\alpha\in [-4L_g,4L_g]}\Big((\alpha-\delta_{g,\alpha},\alpha+\delta_{g,\alpha})\cap \UU_g\Big).
	\end{equation}
	
	Note that \[\bigcup\limits_{\alpha\in [-4L_g,4L_g]} (\alpha-\delta_{g,\alpha},\alpha+\delta_{g,\alpha})\] is an open cover of the closed interval $[-4L_g,4L_g]$, which has a finite cover. We use the center $\alpha$ to represent the interval $(\alpha-\delta_{g,\alpha},\alpha+\delta_{g,\alpha})$ in this finite cover, and put $T$ to be the index set of $\alpha$. 
	
	Therefore, we obtain a finite cover of $\SS_g$ as follows:
	\[(-\infty,-4L_g]\cup[4L_g,\infty)\cup \UU_g^0\cup \bigcup\limits_{\alpha\in T}\Big((\alpha-\delta_{g,\alpha},\alpha+\delta_{g,\alpha})\cap \UU_g\Big).\]
	
	By Propositions~\ref{P:1}, \ref{P:2}, \ref{P:3} and \ref{P:4}, we know that $g$ has rapidly increasing numerators on each set in this cover. It implies that $g$ also has rapidly increasing numerators on $\SS_g$, which completes the proof.
\end{proof}
\section{Theorem~\ref{main thm 2} implies Theorem~\ref{main thm3}}

At the end of this section, we will prove that Theorem~\ref{main thm 2} implies the following proposition which leads to Theorem~\ref{main thm3}.

\begin{proposition}\label{main thm}
	For every $x^2$-divisible polynomial $g(x)\in \QQ[x]$
	there is a constant $\mathbf{M}_g>0$, depending only on $g$, such that
	\[\#\calZ(g_c,0)\leq \mathbf{M}_g\]  for every $c\in \SS_{g}$. 
\end{proposition}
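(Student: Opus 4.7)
The plan is to reduce the rational-coefficient case to Theorem~\ref{main thm 2} by conjugating $g$ with an integer dilation. Write $g(x)=u_dx^d+\cdots+u_2x^2$ with $u_i\in\QQ$, and let $\lambda\in\NN$ be the least common multiple of the denominators of $u_2,\dots,u_d$. Set
\[
\tilde g(x)\coloneqq \lambda^{-1}g(\lambda x)=\sum_{i=2}^{d} u_i\lambda^{i-1}x^i.
\]
Because $\lambda$ is a multiple of the denominator of each $u_i$, the polynomial $\tilde g$ lies in $\ZZ[x]$, is $x^2$-divisible, and has the same degree $d$ as $g$; when $d\ge 3$ it is therefore eligible for Theorem~\ref{main thm 2}.

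Next I would propagate the family $\{g_c\}_{c\in\QQ}$ through the same dilation. A direct calculation with $\phi(x)=\lambda x$ gives $\phi^{-1}\circ g_c\circ\phi=\tilde g_{c/\lambda}$, and consequently
\[
g_c^{\,n}(0)=\lambda\cdot\tilde g_{c/\lambda}^{\,n}(0)\quad\text{for every }n\ge 0.
\]
In particular $\{g_c^{\,n}(0)\}_{n\ge 1}$ and $\{\tilde g_{c/\lambda}^{\,n}(0)\}_{n\ge 1}$ are simultaneously infinite, so $c\in\SS_g$ if and only if $c/\lambda\in\SS_{\tilde g}$, and the two iterate sequences differ only by the fixed scalar $\lambda$.

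The remaining step is a prime-by-prime comparison of primitive prime divisors. Writing $N_n\coloneqq g_c^{\,n}(0)$ and $\tilde N_n\coloneqq N_n/\lambda$, one has $\vp(\tilde N_n)=\vp(N_n)-\vp(\lambda)$, so for every prime $p\nmid\lambda$ the notions ``$p$ is a primitive prime divisor'' coincide for the two sequences. Therefore only primes dividing $\lambda$ can contribute to the symmetric difference $\calZ(g_c,0)\,\triangle\,\calZ(\tilde g_{c/\lambda},0)$; and for each such $p$ there is at most one index $n$ at which $\vp(N_n)$ first crosses the threshold $\vp(\lambda)$ (respectively $0$), so the symmetric difference has size at most $2\omega(\lambda)$. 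In particular,
\[
\#\calZ(g_c,0)\le \#\calZ(\tilde g_{c/\lambda},0)+\omega(\lambda).
\]
Applying Theorem~\ref{main thm 2} to $\tilde g$ bounds the right-hand side by $\mathbf M_{\tilde g}+\omega(\lambda)$, a quantity depending only on $g$, which serves as $\mathbf M_g$. The main (if mild) obstacle is this last bookkeeping: the definition of a primitive prime divisor is sensitive to rescaling, and one has to verify that each prime dividing $\lambda$ can affect $\calZ$-membership only at its first appearance, thereby uniformly capping the loss incurred by passing from $g_c$ to $\tilde g_{c/\lambda}$.
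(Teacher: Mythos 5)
Your conjugation argument is essentially the same as the paper's: the paper introduces Lemmas~\ref{l:1} and \ref{l:2} to show that passing to $\frac{1}{t}g(tx)$ changes $\#\calZ$ by at most $\omega$ of the relevant integer, then picks $t\in\ZZ$ clearing denominators. Your computation with $\phi(x)=\lambda x$, the relation $g_c^{\,n}(0)=\lambda\,\tilde g_{c/\lambda}^{\,n}(0)$, and the prime-by-prime valuation comparison reproduce precisely this mechanism, and the accounting for each $p\mid\lambda$ affecting at most one index $n$ (the first threshold crossing) is sound. The resulting bound $\#\calZ(g_c,0)\le\#\calZ(\tilde g_{c/\lambda},0)+\omega(\lambda)$ is correct.

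There is, however, one genuine gap: you invoke Theorem~\ref{main thm 2} after noting that $\tilde g$ ``when $d\ge 3$'' is eligible, but Proposition~\ref{main thm} is asserted for every $x^2$-divisible polynomial, which by definition includes degree $d=2$. When $d=2$, your $\tilde g(x)=u_2\lambda x^2\in\ZZ[x]$ is still $x^2$-divisible but has degree $2$, so Theorem~\ref{main thm 2} does not apply to it and your chain of inequalities stops short. The paper covers this case separately: for $d=2$ one can choose $t=1/u_2\in\QQ$ so that $\frac{1}{t}g(tx)=x^2$, and then appeal to Krieger's uniform bound for $z^2+c$ (\cite[Theorem 1.1]{Krieger2013}) together with Lemma~\ref{l:2}. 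You should either add this degree-$2$ branch or explain how your argument handles it; as written, the proposal proves the proposition only for $d\ge 3$.
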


For every $n\in \ZZ\backslash\{0\}$, we denote by $\omega(n)$ the number of distinct prime factors of $n$.
\begin{lemma}\label{l:1}
	Let $g(x)\in\QQ[x]$ be $x^2$-divisible  and $a\in \ZZ\backslash\{0\}$. Then with $h(x)\coloneqq \f{1}{a}g(ax)$ we have
	\[|\#\calZ(g_c,0)-\#\calZ(h_{c/a},0)|\leq \omega(a) \quad\textrm{for every~} c\in \QQ. \]
\end{lemma}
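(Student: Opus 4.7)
The plan is to exploit the affine conjugacy between $g_c$ and $h_{c/a}$, which reduces the comparison of Zsigmondy sets to comparing primitive prime divisors of one and the same rational sequence under two competing ``clearings of denominators.'' A direct computation gives
\[ h_{c/a}(x) = h(x) + \frac{c}{a} = \frac{g(ax)}{a} + \frac{c}{a} = \frac{g_c(ax)}{a}, \]
so with $\phi(x)=ax$ one has $h_{c/a} = \phi^{-1}\circ g_c \circ \phi$ and inductively $h_{c/a}^n(0) = g_c^n(0)/a$. Writing $f_n \coloneqq g_c^n(0)$, a prime $p$ is primitive for $f_n$ exactly when $\vp(f_n)>0$ and $\vp(f_k)\leq 0$ for all $1\leq k<n$, whereas $p$ is primitive for $f_n/a$ exactly when $\vp(f_n)>\vp(a)$ and $\vp(f_k)\leq \vp(a)$ for all $1\leq k<n$. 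Since $a \in \ZZ\setminus\{0\}$ gives $\vp(a)\geq 0$ for every $p$ and $\vp(a)=0$ for every $p\nmid a$, the two notions of primitivity coincide outside the finite set of prime divisors of $a$.

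Next I would bound each half of the symmetric difference by a witness-prime argument. Assume $n\in \calZ(g_c,0)\setminus \calZ(h_{c/a},0)$: then $f_n$ has no primitive prime divisor while some prime $p$ is primitive for $f_n/a$. From $\vp(f_n)>\vp(a)\geq 0$ we obtain $p \mid f_n$, so by the absence of a primitive prime divisor of $f_n$ there must exist some $1\leq k<n$ with $\vp(f_k)>0$; the primitivity of $p$ in the sequence $\{f_m/a\}$ forces $\vp(f_k)\leq \vp(a)$, hence $p \mid a$. Since any fixed prime can be primitive in $\{f_m/a\}_{m\geq 1}$ at most once, the number of such $n$ is at most $\omega(a)$. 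The symmetric case $n\in \calZ(h_{c/a},0)\setminus \calZ(g_c,0)$ is handled by an identical argument, this time producing a primitive prime divisor of $f_n$ itself that must divide $a$, and again contributing at most $\omega(a)$ indices.

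Combining the two one-sided bounds,
\[ \bigl|\#\calZ(g_c,0)-\#\calZ(h_{c/a},0)\bigr| \leq \max\Bigl(\#\bigl(\calZ(g_c,0)\setminus \calZ(h_{c/a},0)\bigr),\ \#\bigl(\calZ(h_{c/a},0)\setminus \calZ(g_c,0)\bigr)\Bigr) \leq \omega(a), \]
as required. The main obstacle is the $p$-adic bookkeeping in the middle paragraph: in each case of the symmetric difference one must correctly identify which $p$-adic inequality comes from ``$p$ is primitive in one sequence'' versus ``the other sequence has no primitive prime divisor at level $n$,'' and extract $p\mid a$ in both directions; the rest is the standard uniqueness of the index at which a given prime is primitive.
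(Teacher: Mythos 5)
Your proof is correct and takes essentially the same approach as the paper: use the affine conjugacy $h_{c/a}=\phi^{-1}\circ g_c\circ\phi$ to get $h_{c/a}^n(0)=g_c^n(0)/a$, observe that primitivity is preserved for primes $p\nmid a$, and conclude that the Zsigmondy sets can differ only at indices witnessed by primes dividing $a$. The paper states the last step without detail; your witness-prime counting (each prime primitive at most once in a given sequence, so each half of the symmetric difference has size at most $\omega(a)$) is precisely the argument needed to justify it.
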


\begin{proof}	
	Consider that 
	\[
	g^n_c(ax)=uh^n_{c/a}(x)\quad\mbox{and}\quad g^n_c(0)=ah^n_{c/a}(0).
	\]
	For every $p\nmid u$ we have
	$p$ is a primitive prime divisor of $g^n_c(0)$  if and only if it is a primitive prime divisor of $h^n_{c/a}(0)$. 
	Therefore, the difference between $\#\calZ(g_c,0)$ and $\#\calZ(h_{c/a},0)$ can not exceed the number of prime factors of $a$, which completes the proof.
\end{proof}

\begin{lemma}\label{l:2}
	For any $x^2$-divisible $g(x)\in\QQ[x]$ and $t\in \QQ\backslash \{0\}$, 
	Proposition~\ref{main thm} holds for $g$ if and only if it holds for $\frac{1}{t}g(tx)$.
\end{lemma}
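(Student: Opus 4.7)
The plan is to reduce Lemma~\ref{l:2} to two successive applications of Lemma~\ref{l:1}. Write $t=a/b$ with $a\in\ZZ\setminus\{0\}$, $b\in\NN$ and $\gcd(a,b)=1$, put $h(x)\coloneqq \tfrac{1}{t}g(tx)$, and introduce the intermediate polynomial
\[\tilde g(x)\coloneqq \tfrac{1}{a}g(ax)\in\QQ[x].\]
A direct substitution yields the key identity
\[\tilde g(x)=\tfrac{1}{a}g(ax)=\tfrac{1}{b}h(bx),\]
which exhibits $\tilde g$ simultaneously as the ``$a$-conjugate'' of $g$ and the ``$b$-conjugate'' of $h$. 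Since $g$ is $x^2$-divisible, a quick expansion of $\tfrac{1}{t}g(tx)$ shows that $h$ is $x^2$-divisible as well, so the hypotheses of Lemma~\ref{l:1} are available for both $(g,a)$ and $(h,b)$.

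First I would verify by induction the conjugation identity $h^n_{c/t}(0)=\tfrac{1}{t}g^n_c(0)$. Since the two orbits differ by the constant factor $1/t$, they are simultaneously finite or infinite, so $c\mapsto c/t$ is a bijection $\SS_g\xrightarrow{\sim}\SS_h$. This takes care of the index sets over which the Proposition's uniform bound is required.

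Next I would apply Lemma~\ref{l:1} twice. Applied to $(g,a)$ it gives, for every $c\in\QQ$,
\[\bigl|\#\calZ(g_c,0)-\#\calZ(\tilde g_{c/a},0)\bigr|\leq \omega(a),\]
and applied to $(h,b)$, with the lemma's auxiliary polynomial $\tfrac{1}{b}h(bx)$ being exactly $\tilde g$ by the identity above, it gives (after noting that the parameter $c/t$ of $h$ becomes $(c/t)/b=c/a$)
\[\bigl|\#\calZ(h_{c/t},0)-\#\calZ(\tilde g_{c/a},0)\bigr|\leq \omega(b).\]
A triangle inequality then yields the uniform difference bound
\[\bigl|\#\calZ(g_c,0)-\#\calZ(h_{c/t},0)\bigr|\leq \omega(a)+\omega(b),\]
a constant depending only on $t$.

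Combining this with the bijection $\SS_g\simeq\SS_h$, if Proposition~\ref{main thm} holds for $g$ with constant $\mathbf{M}_g$ then it holds for $h$ with constant $\mathbf{M}_g+\omega(a)+\omega(b)$, and conversely by exchanging the roles of $g$ and $h$ (writing $g(x)=\tfrac{1}{1/t}h((1/t)x)$ and applying the same reasoning with $t$ replaced by $1/t$). I do not anticipate any serious obstacle: the entire argument is a formal reduction to Lemma~\ref{l:1}, and the only real bookkeeping is the identity $\tilde g(x)=\tfrac{1}{b}h(bx)$ and the consistent tracking of how the parameter $c$ transforms under each conjugation.
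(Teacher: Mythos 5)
Your proposal is correct and matches the paper's proof essentially verbatim: the paper also writes $t=a/b$, introduces the same intermediate polynomial $h_1(x)=\tfrac{1}{a}g(ax)=\tfrac{1}{b}h(bx)$, applies Lemma~\ref{l:1} to the pairs $(g,a)$ and $(h,b)$, and concludes by the triangle inequality that $|\#\calZ(g_c,0)-\#\calZ(h_{c/t},0)|\leq\omega(a)+\omega(b)$ uniformly in $c$. The extra bookkeeping you flag (that $h$ remains $x^2$-divisible and that $c\mapsto c/t$ identifies $\SS_g$ with $\SS_h$) is left implicit in the paper but is exactly the right thing to observe.
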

\begin{proof}
	Let 
	$t=\frac{a}{b}$ be an arbitrary rational number.
	Put \[h(x)\coloneqq \f{1}{t}g(tx)\quad\textrm{and}\quad h_1(x)\coloneqq\f{1}{a}g(ax).\]
	Note that $h_1(x)=\frac{1}{b}h(bx)$.
	
	By Lemma~\ref{l:1} for every $c\in \QQ$ we have
	\begin{align*}
	|\#\calZ(g_c,0)-\#\calZ((h_1)_{c/a},0)|&\leq \omega(a),\\
	|\#\calZ(h_{c/t},0)-\#\calZ((h_1)_{c/a},0)|&\leq \omega(b),
	\end{align*}
	which implies
	\[|\#\calZ(g_c,0)-\#\calZ(h_{c/t},0)|\leq \omega(a)+\omega(b).\]
	
	Since
	$\omega(a)$ and $\omega(b)$ are both independent of $c$, we complete the proof.
\end{proof}

%Therefore, to prove Theorem~\ref{main thm3}, it is enough to show the following.
%

\begin{proposition}\label{aaa}
	Theorem~\ref{main thm 2} implies Theorem~\ref{main thm3}.
\end{proposition}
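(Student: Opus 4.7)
The plan is a two-stage reduction. Stage one moves the critical point to the origin and splits off the resulting constant term, reducing Theorem~\ref{main thm3} to Proposition~\ref{main thm} for an $x^2$-divisible polynomial in $\QQ[x]$. Stage two scales the variable to clear denominators and arrive in $\ZZ[x]$, where Theorem~\ref{main thm 2} applies once $d \geq 3$; the remaining case $d = 2$ reduces, after one further scaling, to Krieger's theorem on the family $x^2 + c$ from \cite[Theorem 1.1]{Krieger2013}.

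For stage one, I would set $F(x) \coloneqq f(x+u) - u$. A short induction on $n$ shows that $F_c^n(0) = f_c^n(u) - u$ for every $n \geq 1$, so $\calZ(f_c, u) = \calZ(F_c, 0)$ and $\SS_{f,u} = \SS_{F,0}$ via the identity on $c$. Since $F'(0) = f'(u) = 0$, the polynomial $G(x) \coloneqq F(x) - F(0)$ lies in $\QQ[x]$, is $x^2$-divisible and has degree $d$. The identity $F_c = G_{c + F(0)}$ reparametrizes the family, yielding $\calZ(F_c, 0) = \calZ(G_{c + F(0)}, 0)$, with $c \in \SS_{F,0}$ iff $c + F(0) \in \SS_G$. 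Hence Proposition~\ref{main thm} applied to $G$ delivers exactly the uniform bound required by Theorem~\ref{main thm3}.

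For stage two, write $G(x) = \sum_{i=2}^{d} u_i x^i$ and pick $t_0 \in \NN$ to be a common multiple of the denominators of $u_2, \ldots, u_d$. The rescaled polynomial $h(x) \coloneqq t_0^{-1} G(t_0 x) = \sum_{i=2}^{d} u_i t_0^{i-1} x^i$ has integer coefficients (because $i - 1 \geq 1$) and remains $x^2$-divisible of degree $d$. By Lemma~\ref{l:2}, Proposition~\ref{main thm} holds for $G$ iff it holds for $h$. If $d \geq 3$, Theorem~\ref{main thm 2} supplies the constant $\mathbf{M}_h$ directly. If $d = 2$, then necessarily $h(x) = u_2 x^2$ with $u_2 \in \ZZ\setminus\{0\}$, and one more application of Lemma~\ref{l:2} with $t = 1/u_2 \in \QQ$ reduces to $\tilde h(x) = x^2$, where \cite[Theorem 1.1]{Krieger2013} gives the bound.

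I do not anticipate a genuine obstruction, since the analytic content is already packaged inside Theorem~\ref{main thm 2} together with the transfer Lemmas~\ref{l:1}--\ref{l:2}. The only delicate point is checking that each reduction step changes $\#\calZ$ by an amount independent of $c$: the shift is a pure change of variable that preserves the integer sequence verbatim, while Lemma~\ref{l:1} controls the scaling discrepancy by $\omega(t_0)$ (plus, when $d = 2$, an additional $\omega(u_2)$), both intrinsic to $f$ and not to $c$. Tracking these constants produces an explicit bound $\mathbf{M}_f = \mathbf{M}_h + \omega(t_0)$, with the obvious modification in the quadratic case.
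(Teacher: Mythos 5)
Your proof is correct and takes essentially the same two-stage reduction as the paper: a translation to move the critical point to the origin, followed by a rescaling to clear denominators via Lemma~\ref{l:2}, appealing to Theorem~\ref{main thm 2} for $d\geq 3$ and to \cite[Theorem 1.1]{Krieger2013} for $d=2$. You are in fact slightly more careful than the paper's own write-up: the shifted polynomial $F(x)=f(x+u)-u$ generally has a nonzero constant term and so need not be $x^2$-divisible, and your explicit reparametrization $F_c = G_{c+F(0)}$ with $G\coloneqq F-F(0)$ makes precise a step the paper leaves implicit.
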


\begin{proof}
\noindent\textbf{Step I:} We first prove that 
	Theorem~\ref{main thm 2} implies Proposition~\ref{main thm}.
	
	Given any $x^2$-divisible polynomial $g(x)\in \QQ[x]$ of degree $d\geq 2$, if $d=2$, then we can find $t\in \QQ$ such that $\frac{1}{t}g(tx)=x^2$. Combining Lemma~\ref{l:2} with \cite[Theorem 1.1]{Krieger2013}, we prove Proposition~\ref{main thm} for this case. 
	
	If $d\geq 3$, there is $t\in \ZZ$ such that $\frac{1}{t}g(tx)\in \ZZ[x]$. Combined with Lemma~\ref{l:2}, this proves Proposition~\ref{main thm} for this case.

\noindent\textbf{Step II:} Now we prove that Proposition~\ref{main thm}  implies Theorem~\ref{main thm3}.
	
	For every polynomial $g(x)\in \QQ[x]$ with a critical point $u\in \QQ$, if we put $f(x)\coloneqq g(x+u)-u$, then we know that $0$ is a critical point of $f$ and 
	for every $c\in \QQ$ we have $f_c^n(0)=g_c^n(u)-u$ and hence
	\[\calZ(g_c,u)=\calZ(f_c,0).\]
	This completes the proof. 
\end{proof}


\begin{thebibliography}{9}
	
	\bibitem{Bang1886} 
	{\bibname A. S. Bang.}
	Talteoretiske unders{\o}gelser. \textit{Tidsskrift Mat.} (5) {\bf4} (1886), 70--80; 130--137.
	
	%\bibitem{Yann2009} Bugeaud, Y, On the convergents to algebraic numbers,
	%\textit{Analytic number theory, Cambridge
	%	Univ. Press, Cambridge} (2009), 133--143.
	
	
	
	
	\bibitem{Carmichael1913}
	 {\bibname R. D. Carmichael.} On the numerical factors of the arithmetic forms $\alpha^n\pm\beta^n$. \textit{Ann. of Math.} (1/4) {\bf 15} (1913),  49--70.
	
	\bibitem{DoerksenHaensch2011}
	 {\bibname  K. Doerksen and A. Haensch.} Primitive prime divisors in zero orbits of polynomials. \textit{Integers} (3) {\bf 12} (2011), 465--473.
	
	\bibitem{GrattonNguyenTucker2013}
	 {\bibname C. Gratton, K. Nguyen, and T. J. Tucker.} ABC implies primitive prime divisors in arithmetic
	dynamic. \textit{Bull. Lond. Math. Soc.} (6) {\bf45}  (2013), 1194--1208.
	
	\bibitem{IngramSilverman2009} 
	{\bibname P. Ingram and J. Silverman.} Primitive divisors in arithmetic dynamics. \textit{Math. Proc. Cambridge Philos. Soc.} (2) {\bf146}  (2009),  289--302.
	
	\bibitem{Krieger2013} 
	{\bibname H. Krieger.} Primitive prime divisors in the critical orbit of $z^d+c$. \textit{Int. Math. Res. Not. IMRN}  {\bf 23} (2013), 5498--5525.
	
	%\bibitem{Mahler1949} Mahler, K., On the continued fractions of quadratic and cubic irrationals, \textit{Ann. Mat. Pura Appl.} {\bf30}, no. 4 (1949), 147--172.
	
	
	
	\bibitem{Rice2007} 
	{\bibname B. Rice.} Primitive prime divisors in polynomial arithmetic dynamics. {\it Integers} (1) {\bf 7} (2007), A26, 1--16.
	
	\bibitem{Schinzel1974} 
	{\bibname A. Schinzel.} Primitive divisors of the expression $a^n-b^n$ in algebraic number fields. \textit{J. Reine Angew. Math.} {\bf 268/269} (1974), 27--33.
	
	\bibitem{Zsigmondy1892} 
	{\bibname K. Zsigmondy.} Zur Theorie der Potenzreste.  \textit{Monatsh. Math. Phys.} (1) {\bf 3} (1892), 265--284.
	
	\bibitem{Evertse1997} 
	{\bibname J. H. Evertse.} The number of algebraic numbers of given degree approximating a given algebraic number. \textit{Analytic Number Theory, Y. Motohashi (ed.), London Math. Soc.
		Lecture Notes Ser. 247, Cambridge Univ. Press} (1998), 53--83.
	
	%
	%
	%
	%\bibitem[JKMT]{JKMT} Jamie Juul, Par Kurlberg, Kalyani Madhu, and Tom J. Tucker, Wreath products and proportions
	%of periodic points, {\it IMRN}, {\bf 13} (2016), 3944--3969.
	%
	%\bibitem[JS]{JS} J. Silverman, The Arithmetic of Dynamical Systems, {\it Springer-Verlag New York, Inc}, (2007).
	%
	%\bibitem[GC]{GC} G. Castelnuovo, Ricerche di geometria sulle curve algebriche, {\it Atti Reale
	%Accademia delle Scienze di Torino}, {\bf 24} (1889), 346--373.\Liu{Rufei, what does this article do?}
	


\end{thebibliography}
\end{document}